\newtheorem{thm}{Theorem}[section]
\newtheorem{lemma}[thm]{Lemma}
\newtheorem{cor}[thm]{Corollary}
\newtheorem{prop}[thm]{Proposition}
\newtheorem{obs}[thm]{Observation}
\theoremstyle{definition}
\newtheorem{defn}[thm]{Definition}
\newtheorem{rmk}[thm]{Remark}
\newtheorem*{SSalg}{The Schramm-Steif randomized algorithm}
\def\Pr{\mathbb{P}}
\def\Ex{\mathbb{E}}
\def\Inf{\textup{Inf}}
\def\Var{\textup{Var}}
\def\Cov{\textup{Cov}}
\newcommand{\N}{\mathbb{N}}
\newcommand{\Z}{\mathbb{Z}}
\newcommand{\RR}{\mathbb{R}}
\newcommand{\HH}{\mathbb{H}}
\def\AA{\mathbb{A}}
\newcommand{\A}{\mathcal{A}}
\newcommand{\C}{\mathcal{C}}
\newcommand{\F}{\mathcal{F}}
\def\le{\leqslant}
\def\ge{\geqslant}
\def\eps{\varepsilon}
\def\ds{\displaystyle}
\def\ol{\overline}
\title{Quenched Voronoi percolation}
\author{Daniel Ahlberg}
\address{IMPA, Estrada Dona Castorina 110, Jardim Bot\^anico, Rio de Janeiro, RJ, Brasil \and Department of Mathematics, Uppsala University, SE-75106 Uppsala, Sweden} \email{ahlberg@impa.br}
\author{Simon Griffiths}
\address{Department of Statistics, University of Oxford, Oxford, United Kingdom} \email{simon.griffiths@stats.ox.ac.uk}	
\author{Robert Morris}
\address{IMPA, Estrada Dona Castorina 110, Jardim Bot\^anico, Rio de Janeiro, RJ, Brasil} \email{rob@impa.br}
\author{Vincent Tassion}
\address{D\'epartement de Math\'ematiques Universit\'e de Gen\`eve, Gen\`eve, Switzerland} \email{Vincent.Tassion@unige.ch}
\thanks{Research supported in part by postdoctoral grant 637-2013-7302 from the Swedish Research Council (DA), EPSRC grant EP/J019496/1 (SG), a CNPq bolsa de Produtividade em Pesquisa (RM), and ANR grant MAC2 (ANR-10-BLAN-0123) and the Swiss NSF (VT)}
\begin{document}

\begin{abstract}
We prove that the probability of crossing a large square in quenched Voronoi percolation converges to $1/2$ at criticality, confirming a conjecture of Benjamini, Kalai and Schramm from 1999. The main new tools are a quenched version of the box-crossing property for Voronoi percolation at criticality, and an Efron-Stein type bound on the variance of the probability of the crossing event in terms of the sum of the squares of the influences. As a corollary of the proof, we moreover obtain that the quenched crossing event at criticality is almost surely noise sensitive.
\end{abstract}

\maketitle

\section{Introduction}\label{sec:intro}

The \emph{noise sensitivity} of a Boolean function was introduced in 1999 in a seminal paper of Benjamini, Kalai and Schramm~\cite{BKS}, and has since developed into an important area of probability theory (see, e.g.,~\cite{GPS,GS,SS}), linking discrete Fourier analysis with percolation theory and combinatorics. One of the main results of~\cite{BKS} gave a sufficient condition for a sequence of functions $f_n \colon \{0,1\}^n \to \{0,1\}$ to be sensitive to small amounts of random noise in the following precise sense: if $\omega \in \{0,1\}^n$ is chosen uniformly at random, and $\omega^\eps$ is obtained from $\omega$ by resampling each variable with some fixed probability $\eps > 0$, then $f_n(\omega)$ and $f_n(\omega^\eps)$ are asymptotically independent. They used this theorem to show that the sequence of functions which encodes crossings of $n \times n$ squares in bond percolation on $\Z^2$ is noise sensitive. Thus, even if one knows all but a \emph{random} $o(1)$-proportion of the edges, one still (with high probability) has very little information about the crossing event.  

The authors of~\cite{BKS} furthermore made a number of conjectures regarding more precise notions of sensitivity and sensitivity to different types of noise. Several of these conjectures have since played an important role in the subsequent development of the area, most spectacularly in~\cite{SS} and~\cite{GPS}, where extremely precise results were obtained about the Fourier spectrum of the crossing event, and about the `dynamical percolation' process introduced by H\"aggstr\"om, Peres and Steif~\cite{HPS} and (independently) by Benjamini, see~\cite{S}. To give another example,  they made the following conjecture for Bernoulli bond percolation on the square lattice: even if you are told the status of \emph{all} the vertical edges, you still have very little information about the crossing event. This conjecture was proved by Garban, Pete and Schramm~\cite[Theorem~1.3]{GPS}, as a consequence of their very precise bounds on the Fourier spectrum. Note that in this theorem we are given a \emph{deterministic} set of edges (of density $1/2$), rather than a \emph{random} set of edges (of density $1 - o(1)$) as in the result stated above.

In this paper, we will prove a similar result (also conjectured in~\cite{BKS}) in the setting of \emph{Voronoi percolation}: that knowing the point set (but not the colours of the cells) gives asymptotically no information about the crossing event. In order to state our main result precisely, we will need a few basic definitions.

Consider a set $\eta$ of $n$ points in the square $S = [0,1]^2$, each chosen independently and uniformly at random. For each $u \in \eta$, define the Voronoi (or Dirichlet) cell\footnote{The study of these objects dates back at least to Dirichlet~\cite{D} in 1850, who used them in his work on quadratic forms, although they appear to have been introduced even earlier, by Kepler and (independently) Descartes, see~\cite{LP}. The natural generalisation to $d$ dimensions was first studied by Voronoi~\cite{V} in 1908.} of $u$ to be
$$C(u) \, = \, \big\{ x \in [0,1]^2 \,:\, \| u - x \|_2 \le \| v - x \|_2 \textup{ for every } v \in \eta \big\},$$
and let $\omega \colon \eta \to \{-1,1\}$ be a uniformly random two-colouring of the points of $\eta$; we will call the points $u$ (and the associated cells $C(u)$) with $\omega(u) = 1$ `red' and those with $\omega(u) = -1$ `blue'. We say that there is a \emph{red horizontal crossing} of $S$ if there is a path from the left- to the right-hand side of $S$ that only intersects red cells, and write $H_S$ for the event that there exists such a red horizontal crossing of $S$. Note that $\Pr( H_S ) = 1/2$, by symmetry. We refer the reader who is unfamiliar with Voronoi percolation to~\cite{BRbook} for a more extensive introduction.

The following theorem confirms (in a strong form) a conjecture of Benjamini, Kalai and Schramm~\cite{BKS}.

\begin{thm}\label{thm:BKSconj}
There exists $c > 0$ such that
$$\Pr\left( \frac{1}{2} - n^{-c} \,\le\, \Pr\big( H_S \,|\, \eta \big) \,\le\, \frac{1}{2} + n^{-c} \right) \, \ge \, 1 - n^{-c}$$
for all sufficiently large $n \in \N$.
\end{thm}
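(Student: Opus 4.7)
The plan is to show that the variance of the quenched crossing probability $F(\eta) := \Pr(H_S \mid \eta)$ is polynomially small in $n$; combined with the identity $\Ex[F] = \Pr(H_S) = 1/2$ (which follows from the colour-exchange symmetry and the self-duality of planar Voronoi percolation), Chebyshev's inequality then implies the theorem. I would bound $\Var(F)$ using two ingredients: a quenched Russo--Seymour--Welsh box-crossing theorem providing geometric control on the point configuration, and an Efron--Stein type inequality converting this geometric control into a variance bound.

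\emph{Step 1 (quenched RSW).} First, I would prove that with probability at least $1-n^{-c}$ over $\eta$, the quenched crossing probability of every rectangle of bounded aspect ratio (at the relevant scales) is bounded away from $0$ and $1$, uniformly in location. Standard gluing and square-root-trick arguments then furnish polynomial upper bounds on quenched arm probabilities; in particular, for every $u_i \in \eta$, the influence $\Inf_i(f_\eta)$ of the colour $\omega(u_i)$ on $f_\eta(\omega) := \mathbf{1}_{H_S}(\eta,\omega)$ satisfies $\Inf_i(f_\eta) \le n^{-\alpha}$ on this good event, for some $\alpha > 1/2$.

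\emph{Step 2 (Efron--Stein and conclusion).} Viewing $F$ as a function of the $n$ iid points, the classical Efron--Stein inequality gives
\[
\Var(F) \; \le \; \tfrac{1}{2}\sum_{i=1}^n \Ex\!\left[(F(\eta) - F(\eta^{(i)}))^2\right],
\]
where $\eta^{(i)}$ replaces $u_i$ by an independent copy. The crucial new estimate is a bound of order $\Ex[\Inf_i(f_\eta)^2]$ for each summand, obtained by coupling the colourings on either side of the swap so that the geometric effect of moving a single point is absorbed into the Boolean pivotality of the affected cell. Combining with Step~1, the good event contributes $\sum_i \Inf_i^2 \le n \cdot n^{-2\alpha} = n^{-c'}$ while the complementary event contributes a harmless $n \cdot n^{-c}$, yielding $\Var(F) \le n^{-c''}$.

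The hardest part will be the quenched RSW of Step~1. Standard RSW proofs for Voronoi percolation (for example Bollob\'as--Riordan's) only produce annealed crossing estimates, and upgrading them to a statement holding \emph{uniformly} over a polynomial family of scales and rectangles requires ruling out atypical configurations of the point process that could produce long monochromatic shortcuts or blockers. This is likely to demand a delicate multi-scale construction combined with concentration estimates for local functionals of $\eta$. Step~2 is conceptually lighter but also requires a genuinely new ingredient, since the naive Efron--Stein output involves point-resampling differences rather than the squared Boolean influences whose smallness carries the arm-exponent information.
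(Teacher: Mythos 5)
There is a genuine gap, and it sits at the heart of your Step~1. You claim that a quenched RSW theorem plus ``standard gluing and square-root-trick arguments'' yields $\Inf_i(f_\eta)\le n^{-\alpha}$ with $\alpha>1/2$ for every point. The influence of the colour of a bulk point is (up to boundary effects) a quenched \emph{four-arm} (pivotality) probability to distance of order $\sqrt{n}$ in rescaled units, so your claim amounts to a quenched four-arm exponent strictly greater than $1$. Box-crossing bounds alone, even in the strong uniform form you propose, only give arm probabilities decaying like $d^{-\eps}$ for some unspecified small $\eps>0$ (and BK-type arguments give products of such small exponents); an exponent larger than $1$ is not known for Voronoi percolation even in the annealed setting, and it certainly does not follow from RSW by gluing. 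This is exactly the obstruction that the paper circumvents: it never bounds individual influences, but instead bounds $\sum_m\Inf_m(f^\eta_R)^2$ by the revealment of the Schramm--Steif exploration algorithm (Theorem~\ref{thm:SS}), and the revealment needs only a quenched \emph{one-arm} estimate with an arbitrarily small positive exponent (Proposition~\ref{prop:quenched:onearm}). A related structural issue: the strong two-sided, uniform-over-scales quenched RSW you take as input is much more than is needed and is, in this paper, partly a \emph{consequence} of the main theorem (Theorem~\ref{thm:RSW} is deduced at the end from the annealed result of Tassion); what the proof actually uses is only the one-sided tail bound $\Pr\big(\Pr(H_R\mid\eta)\le 2^{-k}\big)\le(1-c)^k$ for a fixed rectangle and its half-plane analogue, obtained cheaply from the identity $\Pr(H_R\mid\eta)=\Ex[2^{-X}\mid\eta]$ (Lemma~\ref{magic:lemma}), colour switching, FKG/BK, and the annealed box-crossing property. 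Without replacing your influence bound by a revealment-type (or other spectral) argument, the variance bound in your plan does not follow.

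Your Step~2, by contrast, is essentially the paper's Theorem~\ref{thm:VarInf} and is correct in substance, though the ``coupling'' you invoke needs an actual mechanism. The paper proves it by revealing the points one at a time as a martingale and showing $|q^{\eta}-q^{\eta^-}|\le\Inf_m(f^\eta_R)$, where $\eta^-$ deletes the $m$-th point; the key is monotonicity of the crossing event on $\{-1,0,1\}^\eta$ (a red point is better than no point, which is better than a blue point). Your resampling version $\Ex\big[(F(\eta)-F(\eta^{(i)}))^2\big]\lesssim\Ex\big[\Inf_i(f_\eta)^2\big]$ then follows by a triangle inequality through the deleted configuration, so this half of the plan is reparable; the influence bound in Step~1 is not, as stated.
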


Let $f^\eta \colon \{-1,1\}^\eta \to \{0,1\}$ be the function such that $f^\eta(\omega) = 1$ if and only if $H_S$ holds. The key new idea of the proof of Theorem~\ref{thm:BKSconj} is 
the following Efron-Steif type bound (see Theorem~\ref{thm:VarInf}, below) on the variance of the probability of the crossing event in terms of the influences of $f^\eta$, which can be viewed as a random Boolean function:
\begin{equation}\label{eq:varinf}
\Var\Big( \Pr\big( H_S \,|\, \eta \big) \Big) \, \le \, \sum_{m=1}^n \Ex\big[ \Inf_m( f^\eta )^2 \big].
\end{equation}
Recall that the influence $\Inf_m(f_n)$ of the $m^{th}$ variable of a Boolean function $f_n \colon \{-1,1\}^n \to \{0,1\}$ is defined to be the expected absolute change in $f_n$ when the sign of the $m^{th}$ variable is flipped, i.e.,
$$\Inf_m(f_n) \, = \, \Pr\big( f_n(\omega) \ne f_n(\omega') \big),$$
where $\omega$ is chosen uniformly, and $\omega'$ is obtained from $\omega$ by flipping the $m^{th}$ variable. Benjamini, Kalai and Schramm~\cite{BKS} proved that
\begin{equation}\label{eq:BKSthm}
  \sum_{m = 1}^n \Inf_m(f_n)^2 \to 0 \; \textup{ as } n \to \infty \qquad \Rightarrow \qquad (f_n)_{n \in \N} \textup{ is noise sensitive,}
\end{equation}
and moreover introduced a technique (the `algorithm method', see below) which can often be used to bound $\sum_{m = 1}^n \Inf_m(f_n)^2$ when $f_n$ encodes crossing events in percolation models. We will use this method (or, more precisely, the `randomized' version of it developed by Schramm and Steif~\cite{SS}), together with a new `box-crossing property' for quenched Voronoi percolation (see below), to bound\footnote{More precisely, since $f^\eta$ is a random function we will prove that our bound on $\sum_{m = 1}^n \Inf_m(f^\eta)^2$  holds with high probability as $n \to \infty$.} $\sum_{m = 1}^n \Inf_m(f^\eta)^2$, and hence deduce Theorem~\ref{thm:BKSconj}.

As an immediate consequence of the proof outlined above, together with~\eqref{eq:BKSthm}, we also obtain the following theorem. Let us say that \emph{quenched Voronoi percolation is almost surely noise sensitive at criticality} if
\begin{equation}\label{eq:NSdef}
\Ex\big[ f^\eta(\omega) f^\eta(\omega^\eps) \,|\, \eta \big] - \Ex\big[ f^\eta(\omega) \,|\, \eta \big]  \Ex\big[ f^\eta(\omega^\eps) \,|\, \eta \big] \to 0
\end{equation}
as $n \to \infty$ with probability 1 for every $\eps \in (0,1)$, where $\omega$ and $\omega^\eps$ are as defined above.

\begin{thm}\label{thm:QV:NS}
Quenched Voronoi percolation is almost surely noise sensitive at criticality.
\end{thm}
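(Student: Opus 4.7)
The plan is to apply the Benjamini-Kalai-Schramm criterion~\eqref{eq:BKSthm} conditionally on the point configuration $\eta$. For each fixed realisation of $\eta$, the map $f^\eta$ is a \emph{deterministic} Boolean function on $\{-1,1\}^\eta$, and the conditional noise-sensitivity statement~\eqref{eq:NSdef} is exactly the conclusion of~\eqref{eq:BKSthm} applied pointwise. Hence it suffices to upgrade the probabilistic control on the sum of squared influences, which drives the proof of Theorem~\ref{thm:BKSconj}, into an almost-sure statement: I want to show that, with probability $1$, $\sum_{m=1}^n \Inf_m(f^\eta)^2 \to 0$ as $n \to \infty$.

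The proof of Theorem~\ref{thm:BKSconj}, combining the Efron-Stein bound~\eqref{eq:varinf} with the Schramm-Steif randomized algorithm and the new quenched box-crossing property, furnishes a polynomial tail estimate of the form
$$\Pr\bigg( \sum_{m=1}^n \Inf_m(f^\eta)^2 > n^{-c} \bigg) \, \le \, n^{-c}$$
for some constant $c > 0$. Along any polynomially growing subsequence $(n_k)$ these probabilities become summable, so the first Borel-Cantelli lemma, applied on the joint probability space carrying the family $(\eta_n)_{n \ge 1}$, yields $\sum_m \Inf_m(f^{\eta_{n_k}})^2 \to 0$ almost surely along the subsequence. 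To extend this convergence to the full sequence one can either strengthen the tail to be summable in $n$ (which the Schramm-Steif-based argument should deliver once the constants are tracked carefully), or couple the configurations for different $n$ by realising them as restrictions of a common Poisson point process, in which case the sum of squared influences varies regularly enough with $n$ for the passage from subsequence to full sequence to be automatic.

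Granted almost-sure convergence of $\sum_m \Inf_m(f^{\eta_n})^2$ to zero, the final step is to apply~\eqref{eq:BKSthm} on the full-probability event where this convergence holds: on this event, the realised deterministic sequence $(f^{\eta_n})$ is noise sensitive for every $\eps \in (0,1)$, which is exactly~\eqref{eq:NSdef}. The only genuinely non-routine ingredient is the upgrade from convergence in probability to almost-sure convergence of $\sum_m \Inf_m(f^\eta)^2$; this is what I expect to be the main technical hurdle, since the polynomial tail coming out of Theorem~\ref{thm:BKSconj} is not quite summable along the full sequence without either a subsequence-plus-coupling argument or a sharpening of the estimate. Everything else follows by applying the Benjamini-Kalai-Schramm criterion inside the conditional expectation.
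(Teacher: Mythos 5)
Your route is essentially the paper's: condition on $\eta$, establish that $\sum_{m} \Inf_m(f^\eta)^2 \to 0$ almost surely, and then apply the Benjamini--Kalai--Schramm criterion~\eqref{eq:BKSthm} to the realised deterministic sequence of Boolean functions to obtain~\eqref{eq:NSdef}. The only place you diverge is the step you single out as the ``main technical hurdle'', and that hurdle is not actually there: the quantity you need is not the tail $n^{-c}$ appearing in Theorem~\ref{thm:BKSconj}, but the influence bound itself, and Proposition~\ref{prop:quenched:onearm} and Lemma~\ref{lem:max1arm} are stated for \emph{every} $\gamma > 0$ (with $c = c(\gamma)$), so Lemma~\ref{lem:influences} already gives, for any fixed $\gamma$, a constant $c > 0$ with
$$\Pr\bigg( \sum_{m = 1}^n \Inf_m\big( f^\eta \big)^2 \ge n^{-c} \bigg) \, \le \, n^{-\gamma}.$$
Choosing, say, $\gamma = 2$ makes these probabilities summable over the full sequence $n \in \N$, and the first Borel--Cantelli lemma (which uses only the marginal laws of the bad events, so no coupling of the configurations for different $n$ is required) yields $\sum_m \Inf_m(f^{\eta})^2 \to 0$ almost surely; combined with~\eqref{eq:BKSthm} applied on this full-probability event, this is precisely~\eqref{eq:NSdef}. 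So your first fallback (``the Schramm--Steif-based argument should deliver a summable tail'') is exactly right and is what the paper uses. Your second fallback, by contrast, should not be relied on as written: coupling the point sets via a common Poisson process and asserting that the sum of squared influences ``varies regularly enough with $n$'' to pass from a subsequence to the full sequence is an unproved regularity claim --- the influences are not monotone or obviously continuous in the addition of points --- and would need genuine justification. Fortunately it is unnecessary.
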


In fact, as a consequence of the Schramm-Steif method, we obtain a stronger result: that the noise sensitivity exponent for quenched Voronoi percolation is positive. This means that there exists a constant $c > 0$ such that~\eqref{eq:NSdef} holds even if $\eps = n^{-c}$.  

\begin{rmk}
The word `quenched' refers to the fact that we are proving a statement which holds for almost all choices of $\eta$. The phrase `at criticality' refers to the fact that $\omega$ is chosen uniformly at random. We remind the reader that the critical probability of Voronoi percolation in the plane is $1/2$, as was proved by Bollob\'as and Riordan~\cite{BR}.
\end{rmk}

We remark that Theorem~\ref{thm:QV:NS} is not the first result of this type for a continuum percolation model. Indeed, a similar theorem for the Poisson Boolean model\footnote{In this model, a pair $u,v \in \eta$ is considered to be adjacent if the distance between them is at most 1.} was proved by the first three authors with Broman~\cite{ABGM}, and the techniques introduced in that paper have recently been extended by the first three authors with Balister and Bollob\'as~\cite{ABBGM} to the settings of (annealed) Voronoi percolation and the Poisson Boolean model with random radii. (In each case the point set $\eta$ is perturbed, together with the colours/radii.) We emphasize, however, that the techniques introduced in this paper are completely different from those used in~\cite{ABBGM,ABGM}, where the method involved choosing the point set in two stages, and applying the algorithm method in the non-uniform setting. Indeed, none of the previously-introduced techniques seem to have any chance of working in the setting of quenched Voronoi percolation.

As mentioned above, in order to use the algorithm method we will need to prove a 1-arm estimate that will follow from a quenched version of the box-crossing property for Voronoi percolation at criticality. This result gives bounds on the probability that a rectangle (of fixed aspect ratio) is crossed at criticality, and is an analogue of the celebrated results for bond percolation on $\Z^2$ of Russo~\cite{R} and Seymour and Welsh~\cite{SW}. Corresponding results have been obtained in various related settings, and obtaining such bounds is frequently a key step in the proof of various important applications, see e.g.~\cite{A,BR,DHN,LS,Roy,T}. In particular, an important breakthrough was made by Bollob\'as and Riordan~\cite{BR}, who proved an RSW-type theorem for (annealed) Voronoi percolation\footnote{More precisely, they proved that there exists an infinite sequence of values of $L$ such that the probability of crossing an $L \times \lambda L$ rectangle is bounded away from 0.}, and used it to deduce that the critical probability for percolation is $1/2$. The full box-crossing property in the annealed setting was obtained only very recently, by the fourth author~\cite{T}. We remark that this result will play an important role in our proof of Theorem~\ref{thm:RSW}, below. 

As above, we write $H_R$ for the event that there is a red horizontal crossing of $R$.

\begin{thm}[The quenched box-crossing property for Voronoi percolation]\label{thm:RSW}
For every rectangle $R \subset \RR^2$, there exists a constant $c > 0$ such that the following holds. Let $n \in \N$, let $\eta \subset R$ be a set of $n$ points, each chosen uniformly at random, and let $\omega \colon \eta \to \{-1,1\}$ be a uniform colouring. Then
$$\Pr\Big( c < \Pr\big( H_R \,|\, \eta \big) < 1 - c \Big) \to \, 1$$
as $n \to \infty$. 
\end{thm}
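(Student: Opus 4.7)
My plan is to derive Theorem~\ref{thm:RSW} from Tassion's annealed box-crossing theorem via a concentration argument for the random variable $X := \Pr(H_R \mid \eta)$, seen as a function of the i.i.d.\ point set $\eta$. The annealed result provides a constant $c_0 > 0$ with $\Ex[X] = \Pr(H_R) \in (c_0, 1-c_0)$ uniformly in $n$, so it will suffice to show $\Var(X) \to 0$ and then invoke Chebyshev's inequality to conclude $c_0/2 < X < 1 - c_0/2$ with probability tending to $1$; this gives the theorem with $c := c_0/2$.

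For the variance bound I would use the Efron--Stein inequality applied to the i.i.d.\ sample $\eta$,
$$\Var(X) \; \le \; \frac{1}{2} \sum_{k=1}^n \Ex\big[(X(\eta) - X(\eta^{(k)}))^2\big],$$
where $\eta^{(k)}$ is $\eta$ with its $k$-th point resampled independently. The key geometric observation is that resampling a single point $u_k \to u_k'$ modifies the Voronoi tessellation only inside a local region $Z_k$, consisting of the cells of $u_k$ and $u_k'$ and of their Voronoi neighbours; everywhere else the two tessellations agree. After coupling the colourings on the unchanged cells, $|X(\eta) - X(\eta^{(k)})|$ is bounded above by the $\omega$-probability that some cell of $Z_k$ is pivotal for $H_R$. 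Summing over $k$, and bounding the pivotal probability by the annealed 4-arm event --- whose polynomial decay follows from Tassion's annealed RSW together with the standard FKG/duality arguments that remain valid for the product measure on colourings conditional on $\eta$ --- combined with a tail bound on Voronoi cell diameters, should yield $\Var(X) = o(1)$.

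\emph{Main obstacle.} Making this variance bound quantitatively sharp is the real work. The crude inequality $|X(\eta) - X(\eta^{(k)})|^2 \le |X(\eta) - X(\eta^{(k)})|$ combined with the annealed 4-arm estimate gives only something like $n^{1-\delta}$, not $o(1)$, for the arm exponents currently available in Voronoi percolation. To bridge this gap one must either exploit cancellations between pivotal events at nearby points, or replace Efron--Stein by a sharper Poincar\'e- or entropy-type concentration inequality tailored to Poisson-driven tessellations; and one must separately dispatch the rare case in which $Z_k$ is large (when $u_k$ happens to sit in a sparse region of $\eta$) via a high-probability bound on the maximum Voronoi cell diameter. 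Producing these estimates using only annealed inputs, without looping back through the quenched algorithm method that Theorem~\ref{thm:RSW} is precisely meant to enable, is the central technical difficulty.
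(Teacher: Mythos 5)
Your high-level frame -- locate the mean via Tassion's annealed box-crossing theorem, then show $\Var\big(\Pr(H_R\mid\eta)\big)\to 0$ and apply Chebyshev -- is exactly the frame of the paper (this is Theorem~\ref{thm:BKS:rectangle} combined with~\cite{T}). But the step you leave open, the variance bound, is precisely where all the work lies, and the route you propose for it genuinely fails rather than merely being lossy. Resampling one point of $\eta$ and bounding the change in the quenched crossing probability by a pivotal-type (four-arm) event runs into two compounding problems. First, to use the square in Efron--Stein you would need second-moment control of the \emph{quenched} four-arm probability, i.e.\ a statement like $\Ex\big[\Pr(\text{piv near }u_k\mid\eta)^2\big]\approx \Pr(\text{piv near }u_k)^2$; with only annealed inputs you are forced back to the crude bound $Y^2\le Y$, which, as you note, demands an annealed four-arm exponent exceeding $2$ after summing over the $n$ points. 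Second, no quantitative four-arm exponent is available for Voronoi percolation: Tassion's RSW plus BK gives only $d^{-4\eps}$ for an unspecified small $\eps$, and even the conjectural exponent $5/4$ is below the threshold $2$ (and below $1$ it would not help either without the missing second-moment input). So the obstacle you flag is not a technical refinement away; the approach cannot close without importing quenched information of exactly the kind the theorem is meant to produce.

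The paper bridges this gap by a different decomposition of the same variance. It runs the martingale over the points $\eta_1,\dots,\eta_n$ (Observation~\ref{obs:uncorrelated}), but instead of bounding the increment by a pivotality probability for the \emph{geometry}, it bounds it by the influence of the \emph{colour} of the resampled point: using monotonicity of the crossing event on $\{-1,0,1\}^\eta$ (red point $\ge$ no point $\ge$ blue point), one gets $|q^\eta-q^{\eta^-}|\le \Inf_m(f_R^\eta)$, whence $\Var\big(\Pr(H_R\mid\eta)\big)\le\sum_m\Ex\big[\Inf_m(f_R^\eta)^2\big]$ (Theorem~\ref{thm:VarInf}). The sum of squared colour influences is then controlled by the revealment of the Schramm--Steif exploration algorithm (Theorem~\ref{thm:SS} and Lemma~\ref{lem:SS}), which requires only a quenched \emph{one-arm} estimate, not a four-arm estimate. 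That one-arm bound (Proposition~\ref{prop:quenched:onearm}) is in turn obtained from weak quenched box-crossing bounds in the plane and half-plane (Theorems~\ref{thm:RSW:plane} and~\ref{thm:RSW:halfplane}), whose proof rests on the colour-switching identity $\Pr(H_R\mid\eta)=\Ex[2^{-X}\mid\eta]$ of Lemma~\ref{magic:lemma} together with the annealed RSW, FKG and BK inequalities -- so the only annealed arm input ever needed is one with an unspecified small polynomial exponent, which is exactly what is available. If you want to repair your write-up, the key missing idea to absorb is this replacement of geometric pivotality by colour influence plus the revealment bound.
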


We remark that, moreover, for every $\gamma > 0$ there exists $c = c(\gamma,R) > 0$ such that $\Pr\big( H_R \,|\, \eta \big) \not\in (c,1-c)$ has probability at most $n^{-\gamma}$. An analogous theorem if $\eta$ is a Poisson point process in the plane (or in the half-plane) follows by exactly the same proof. 

We will prove Theorem~\ref{thm:RSW} in three steps. First, we will prove a weaker result for Voronoi percolation in the plane (see Theorem~\ref{thm:RSW:plane}): this says that there exists a
constant $c > 0$ such that
\begin{equation}\label{eq:weakRSW}
  \Pr\bigg( \Pr\big( H_R \,|\, \eta \big) \le \frac{1}{2^k} \bigg) \le \, (1 - c)^k
\end{equation}
for all sufficiently large $k$. We will then deduce an analogous statement for Voronoi percolation in a half-plane; somewhat surprisingly, the deduction is not trivial, and we will have to do some work to deal with the boundary effects (see Section~\ref{sec:RSW:halfplane}). Finally, we will use these results, together with the algorithm method (see Section~\ref{sec:proof}) and our Efron-Stein type inequality~\eqref{eq:varinf}, proved in Section~\ref{sec:VarInf}, to show (see Theorem~\ref{thm:BKS:rectangle}) that 
$$\Pr\big( H_R \,|\, \eta \big) \, \to \, \Ex\big[ \Pr\big( H_R \,|\, \eta \big) \big]$$
in probability, as $n \to \infty$. This result will imply both Theorem~\ref{thm:BKSconj} and Theorem~\ref{thm:RSW}, using the box-crossing property for annealed Voronoi proved in~\cite{T}.

The organisation of the rest of the paper is as follows. First, in Section~\ref{sec:VarInf}, we will bound the variance of the probability of the crossing event by the expected sum of the squares of the influences of $f^\eta$. We will do so by introducing a martingale, whose steps correspond to choosing the points of $\eta$ one-by-one, and bounding the variance of step $m$ in terms of the expectation of the square of the influence of the $m^{th}$ element of $\eta$, see Lemma~\ref{lem:key:VarInf}. Armed with this lemma, the claimed bound~\eqref{eq:varinf} follows easily.

Second, in Section~\ref{sec:RSW}, we will prove weak bounds for the crossing probabilities in quenched Voronoi percolation~\eqref{eq:weakRSW} in both the plane, and the half-plane. The key tools in our (surprisingly simple) proof will be the `box-crossing property' for annealed Voronoi percolation, proved in~\cite{T}, together with colour-switching. In particular, we would like to highlight Lemma~\ref{magic:lemma}, which states that 
$$\Pr\big( H_R \,|\, \eta \big) \, = \, \Ex\big[ 2^{-X} \,|\, \eta \big],$$
where $X$ is the random variable which counts the number of vertex-disjoint vertical monochromatic crossings of $R$. Although this lemma, once stated, is easy to prove, we have found it to be extremely useful, and expect it to have many other applications. 

Finally, in Section~\ref{sec:proof}, we will complete the proof of the main theorems, by using the algorithm method of Benjamini, Kalai and Schramm~\cite{BKS} and Schramm and Steif~\cite{SS} to bound the sum of the squares of the influences of $f^\eta$. (Indeed, once we are armed with the results of Section~\ref{sec:RSW}, the required bound follows by simply repeating the method of~\cite{SS}.) Combining this bound with the results of Section~\ref{sec:VarInf}, we obtain Theorem~\ref{thm:BKSconj}. By~\eqref{eq:BKSthm}, we obtain Theorem~\ref{thm:QV:NS}, and by the box-crossing property for annealed Voronoi percolation~\cite{BR,T}, we obtain Theorem~\ref{thm:RSW}.

\section{Variance and influence}\label{sec:VarInf}

In this section, we will prove a somewhat surprising bound on the (typical) dependence of the crossing event on the point set $\eta$ in terms of the (expected) influences of the colours. Since we will need to use the results of this section in the proof of Theorem~\ref{thm:RSW}, as well as that of Theorem~\ref{thm:BKSconj}, we will work in the more general set-up of an arbitrary rectangle $R \subset \RR^2$, so let $\eta$ be a set of $n$ points in $R$, each of which is chosen independently and uniformly at random. We will write $f_R^\eta \colon \{-1,1\}^\eta \to \{0,1\}$ for the function that encodes whether or not there is a red horizontal crossing of $R$ in the corresponding Voronoi tiling. Recall that 
$$\Inf_m(f_R^\eta) \, := \, \Pr\left( f_R^\eta(\omega) \neq f_R^\eta(\omega') \,\big|\, \eta \right),$$ 
where $\omega'$ equals $\omega$ except on the $m^{th}$ element of $\eta$. 

\pagebreak

The main result of this section is the following inequality, which is highly reminiscent of the well-known inequality of Efron and Stein~\cite{ES}. 

\begin{thm}\label{thm:VarInf}
For every rectangle $R \subset \RR^2$, 
$$\Var\Big( \Pr\big( H_R \,|\, \eta \big) \Big) \, \le \, \sum_{m=1}^n \Ex\big[ \Inf_m( f_R^\eta )^2 \big].$$
\end{thm}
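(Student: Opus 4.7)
The plan is to prove this via a Doob-type martingale in which the points of $\eta$ are revealed one at a time, reducing matters to a per-step bound on the squared martingale increment by the expected squared influence. Fix an arbitrary ordering of the $n$ points of $\eta$ and set $\F_m := \sigma(u_1, \ldots, u_m)$ and $M_m := \Ex[\Pr(H_R \mid \eta) \mid \F_m]$. Since $M_0 = \Pr(H_R)$ and $M_n = \Pr(H_R \mid \eta)$, orthogonality of martingale increments gives
$$\Var\bigl(\Pr(H_R \mid \eta)\bigr) \;=\; \sum_{m=1}^{n} \Ex\bigl[(M_m - M_{m-1})^2\bigr],$$
so that it suffices to establish the per-step estimate (which I would formalise as Lemma~\ref{lem:key:VarInf})
$$\Ex\bigl[(M_m - M_{m-1})^2\bigr] \;\le\; \Ex\bigl[\Inf_m(f_R^\eta)^2\bigr].$$

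For the per-step estimate, condition on $u_1, \ldots, u_{m-1}$ and define $F_m(x) := \Ex[\Pr(H_R \mid \eta) \mid u_1, \ldots, u_{m-1}, u_m = x]$, so that $M_m = F_m(u_m)$, $M_{m-1} = \Ex_{u_m}[F_m(u_m)]$, and the conditional squared increment equals $\Var_{u_m}(F_m(u_m))$. Conditioning further on the colour $\sigma_m \in \{-1,+1\}$ of $u_m$ produces the decomposition
$$F_m(x) \;=\; \tfrac{1}{2}\bigl(P_+(x) + P_-(x)\bigr), \qquad P_\pm(x) := \Pr(H_R \mid u_1, \ldots, u_{m-1}, u_m = x, \sigma_m = \pm 1).$$
Monotonicity of $H_R$ in the colouring gives $P_+(x) \ge P_-(x)$, and by integrating out $u_{m+1}, \ldots, u_n$ and $\sigma_{-m}$ in the definition of $\Inf_m(f_R^\eta)$ one obtains the identity
$$P_+(x) - P_-(x) \;=\; \Ex\bigl[\Inf_m(f_R^\eta) \,\big|\, u_1, \ldots, u_m = x\bigr],$$
so that $P_+ - P_-$ is a smoothed (conditional) form of the $m$-th influence.

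The heart of the proof is then to show, after averaging over $u_1, \ldots, u_{m-1}$, that
$$\Ex\bigl[\Var_{u_m}(F_m(u_m))\bigr] \;\le\; \Ex\bigl[(P_+(u_m) - P_-(u_m))^2\bigr];$$
once this is in hand, Jensen's inequality applied to the inner conditional expectation upgrades $(P_+ - P_-)^2$ to $\Ex[\Inf_m(f_R^\eta)^2 \mid u_1, \ldots, u_m]$, and a further expectation completes the per-step bound. I expect this inequality to be the main obstacle, since a generic functional inequality of the form $\Var\bigl(\tfrac{A+B}{2}\bigr) \le \Ex[(A-B)^2]$ fails (take $A$ and $B$ equal but varying), so the proof cannot be purely formal and must use structural features of Voronoi percolation. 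The natural route is a coupling argument that resamples $u_m$ to an independent uniform copy $u_m'$ and traces how the two Voronoi diagrams, and hence the two crossing events, can differ: the key structural point should be that a position change of $u_m$ alters the crossing only at colourings where the colour of $u_m$ (or of $u_m'$) is pivotal for one of the two diagrams, so that the ``position-variance'' of $F_m$ is morally controlled by the ``colour-pivotality'' that defines the influence.
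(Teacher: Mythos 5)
Your martingale set-up, the orthogonality identity, the decomposition $F_m = \tfrac{1}{2}(P_+ + P_-)$, and the identification of $P_+ - P_-$ with the conditional influence are all correct and match the frame of the paper's argument. However, the step you yourself flag as ``the main obstacle'' --- the inequality $\Ex\bigl[\Var_{u_m}(F_m(u_m))\bigr] \le \Ex\bigl[(P_+(u_m)-P_-(u_m))^2\bigr]$ --- is exactly where the proof lives, and you leave it as a heuristic, so as written there is a genuine gap. The missing idea is to compare $q^{\eta} = \Pr\bigl(H_R \mid \eta\bigr)$ with $q^{\eta^-}$, the crossing probability for the configuration with the $m$-th point deleted. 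Viewing $f_R^\eta$ as a monotone function on $\{-1,0,1\}^\eta$ (a red point is better than no point, which is better than a blue point) gives the pointwise sandwich $P_-(x) \le q^{\eta^-} \le P_+(x)$, hence $|q^{\eta} - q^{\eta^-}| \le \Inf_m(f_R^\eta)$ almost surely; and since $q^{\eta^-}$ does not depend on the position of $u_m$, one may subtract $\Ex\bigl[q^{\eta^-} \mid \F_m\bigr]$ (which is $\F_{m-1}$-measurable) inside the conditional variance, bound the variance by the conditional second moment of $\Ex\bigl[q^\eta - q^{\eta^-} \mid \F_m\bigr]$, and finish with Jensen. This centering at a quantity independent of $u_m$ is precisely what circumvents the failure of the generic inequality $\Var\bigl(\tfrac{A+B}{2}\bigr) \le \Ex[(A-B)^2]$ that you correctly point out.

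Concerning your proposed completion by resampling $u_m$ to an independent copy $u_m'$: the structural claim that a change of position can only alter the crossing at colourings where the colour of $u_m$ is pivotal in one of the two diagrams is true, but its natural proof already goes through the same deleted-point comparison (if $f^{\eta_x}(\omega) \ne f^{\eta_{x'}}(\omega)$, compare both with the configuration in which $u_m$ is absent and use monotonicity in $\{-1,0,1\}$ to force pivotality in $\eta_x$ or in $\eta_{x'}$). Moreover, even granting that claim, the symmetrization $\Var_{u_m}(F_m) = \tfrac12\Ex\bigl[(F_m(u_m)-F_m(u_m'))^2\bigr]$ combined with $|F_m(x)-F_m(x')| \le D(x)+D(x')$, where $D = P_+-P_-$, yields only $\Ex[D^2] + (\Ex[D])^2 \le 2\,\Ex[D^2]$, i.e.\ the bound $2\sum_{m}\Ex\bigl[\Inf_m(f_R^\eta)^2\bigr]$ --- harmless for the applications in the paper, but not the stated inequality with constant $1$. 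So the key per-step estimate is asserted rather than proved, and the natural completion of your route either collapses onto the paper's deleted-point argument or loses a constant factor.
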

 
Note that the following corollary  is an immediate consequence of the above theorem and Chebychev's inequality.
  
\begin{cor}\label{cor:VarInf}
Let $a(n) = \Ex\Big[ \sum_{m = 1}^n \Inf_m(f_R^\eta)^2 \Big]$. Then 
$$\Pr\Big( \big| \Pr( H_R \,|\, \eta ) - \Pr( H_R ) \big| \ge a(n)^{1/3} \Big) \, \le \, a(n)^{1/3}.$$
\end{cor}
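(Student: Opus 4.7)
The plan is a direct application of Chebyshev's inequality with Theorem~\ref{thm:VarInf} supplying the variance bound. Write $X := \Pr(H_R \,|\, \eta)$, viewed as a real-valued random variable that is a function of $\eta$ alone. By the tower property, $\Ex[X] = \Ex\big[ \Pr(H_R \,|\, \eta) \big] = \Pr(H_R)$, so the event $\big\{ |\Pr(H_R\,|\,\eta) - \Pr(H_R)| \ge t \big\}$ is precisely the event $\{|X - \Ex X| \ge t\}$.

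Next, I would invoke Theorem~\ref{thm:VarInf} to get $\Var(X) \le \sum_{m=1}^n \Ex[\Inf_m(f_R^\eta)^2] = a(n)$. Chebyshev's inequality then yields, for every $t > 0$,
$$\Pr\big( |X - \Ex X| \ge t \big) \, \le \, \frac{\Var(X)}{t^2} \, \le \, \frac{a(n)}{t^2}.$$
Choosing $t = a(n)^{1/3}$ produces the bound $a(n)/a(n)^{2/3} = a(n)^{1/3}$ on the right-hand side, which is exactly the claimed inequality. (In the degenerate case $a(n) = 0$, the variance of $X$ vanishes, so $X = \Pr(H_R)$ almost surely and the statement is vacuous.)

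There is no genuine obstacle here: once Theorem~\ref{thm:VarInf} is in hand, the corollary is a one-line application of Chebyshev with the exponent $1/3$ balancing the deviation threshold against the probability bound. The only small point worth flagging is the use of the tower property to identify the mean of $X$ with $\Pr(H_R)$, which is where the annealed and quenched crossing probabilities are tied together.
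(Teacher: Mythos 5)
Your proposal is correct and is exactly the paper's argument: the corollary is stated there as an immediate consequence of Theorem~\ref{thm:VarInf} and Chebyshev's inequality, with the threshold $t = a(n)^{1/3}$ balancing the two sides just as you describe. The identification $\Ex\big[\Pr(H_R\,|\,\eta)\big] = \Pr(H_R)$ via the tower property is the right (and only) bookkeeping point to note.
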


The proof of Theorem~\ref{thm:VarInf} uses the following simple martingale $(q_m)_{m=0}^{n}$. Let us choose the elements of $\eta$ one-by-one, and let $\eta_m$ denote the $m^{th}$ element. Now write
$$q^\eta \, = \, \Pr\big( H_R \,\big|\, \eta \big)$$ 
for the probability of such a crossing given $\eta$, and define
$$q_m \, := \, \Ex\big[ q^\eta \,|\, \F_m \big],$$
where $\F_m$ denotes the $\sigma$-algebra generated by $\eta_1,\ldots,\eta_m$. 


\begin{obs}\label{obs:uncorrelated}
$\Var\big( q^\eta \big) = \, \ds\sum_{m=1}^n \Var\big( q_m - q_{m-1} \big).$
\end{obs}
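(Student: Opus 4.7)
The plan is to recognize $(q_m)_{m=0}^n$ as a Doob martingale and apply the standard orthogonal decomposition of its increments. Since $\F_0$ is trivial and $\F_n$ determines all of $\eta$, we have $q_0 = \Ex[q^\eta]$ and $q_n = q^\eta$, so
$$q^\eta - \Ex[q^\eta] \, = \, \sum_{m=1}^n (q_m - q_{m-1}).$$
Taking variance on both sides reduces the claim to showing that the cross terms vanish.

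To verify that the increments $\Delta_m := q_m - q_{m-1}$ are pairwise uncorrelated, I would first check the martingale property using the tower law: $\Ex[q_m \,|\, \F_{m-1}] = \Ex\big[\Ex[q^\eta \,|\, \F_m] \,\big|\, \F_{m-1}\big] = \Ex[q^\eta \,|\, \F_{m-1}] = q_{m-1}$. Hence $\Ex[\Delta_m \,|\, \F_{m-1}] = 0$, and in particular $\Ex[\Delta_m] = 0$. For any $i < j$, the increment $\Delta_i$ is $\F_{j-1}$-measurable, so conditioning on $\F_{j-1}$ gives
$$\Ex\big[\Delta_i \Delta_j\big] \, = \, \Ex\big[\Delta_i \, \Ex[\Delta_j \,|\, \F_{j-1}]\big] \, = \, 0.$$
Combined with $\Ex[\Delta_m] = 0$, this yields $\Cov(\Delta_i, \Delta_j) = 0$ for $i \ne j$.

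Expanding the variance of the sum then gives
$$\Var(q^\eta) \, = \, \Var\bigg( \sum_{m=1}^n \Delta_m \bigg) \, = \, \sum_{m=1}^n \Var(\Delta_m) \, = \, \sum_{m=1}^n \Var(q_m - q_{m-1}),$$
as claimed. There is essentially no obstacle here; this is the textbook orthogonality of martingale differences, and the only point worth checking is the identification $q_n = q^\eta$, which is immediate from the fact that $\eta$ is $\F_n$-measurable, together with $q_0 = \Ex[q^\eta]$ from triviality of $\F_0$. The real content of Section~\ref{sec:VarInf} will lie in the subsequent Lemma~\ref{lem:key:VarInf}, which must bound each $\Var(\Delta_m)$ by $\Ex[\Inf_m(f_R^\eta)^2]$; the present observation is just the bookkeeping step that lets such a per-step bound be summed into the global inequality of Theorem~\ref{thm:VarInf}.
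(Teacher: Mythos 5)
Your proof is correct and follows essentially the same route as the paper: write $q^\eta - \Ex[q^\eta]$ as the telescoping sum of martingale increments and kill the cross terms by conditioning (the paper conditions on $\F_i$ rather than $\F_{j-1}$, an immaterial difference). Nothing is missing.
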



\begin{proof}
Since $q^\eta - \Ex[q^\eta] = \sum_{m=1}^n (q_m - q_{m-1})$, it will suffice to show that $\Cov(q_i - q_{i-1},q_j - q_{j-1}) = 0$ for every $1 \le i < j \le n$. To see this, we simply condition on $\F_i$, which gives
$$\Cov(q_i - q_{i-1},q_j - q_{j-1}) \, = \, \Ex\Big[ \Ex\big[ \big( q_i - q_{i-1} \big)\big( q_j - q_{j-1} \big) \,\big|\, \F_i \big] \Big] \, = \, 0,$$
since $\Ex\big[ q_j - q_{j-1} \,|\, \F_i \big] = 0$, and $q_i - q_{i-1}$ is determined by $\eta_1,\ldots,\eta_i$. 
\end{proof}
 
By Observation~\ref{obs:uncorrelated}, the following lemma completes the proof of Theorem~\ref{thm:VarInf}. 

\begin{lemma}\label{lem:key:VarInf}
For every $1 \le m \le n$,
$$\Var\big( q_m - q_{m-1} \big) \, \le \, \Ex\big[ \Inf_m\big( f_R^\eta \big)^2 \big]$$
almost surely.
\end{lemma}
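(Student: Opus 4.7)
The plan is to strengthen the claim to the pointwise conditional bound
$$\Var\big( q_m \,\big|\, \F_{m-1} \big) \,\le\, \Ex\big[ \Inf_m(f_R^\eta)^2 \,\big|\, \F_{m-1} \big]$$
almost surely. Since $q_{m-1}$ is $\F_{m-1}$-measurable, $\Var(q_m - q_{m-1}) = \Ex[(q_m - q_{m-1})^2] = \Ex[\Var(q_m | \F_{m-1})]$, so this is sufficient. Conditional on $\F_{m-1}$, one has $q_m = h(\eta_m)$, where $h(x) := \Pr(H_R \,|\, \F_{m-1}, \eta_m = x)$, and I would invoke the standard exchangeable-pair identity, with $\eta_m'$ an independent copy of $\eta_m$:
$$\Var\big( h(\eta_m) \,\big|\, \F_{m-1} \big) \,=\, \tfrac{1}{2}\Ex\big[(h(\eta_m) - h(\eta_m'))^2 \,\big|\, \F_{m-1}\big].$$

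To bound the right-hand side, I would couple the two computations of $h$ by sharing the remaining points $\eta^+ := (\eta_{m+1}, \ldots, \eta_n)$ and the colour vector $\omega \in \{-1, 1\}^n$ between the configurations $\eta(x) := (\eta_1, \ldots, \eta_{m-1}, x, \eta^+)$ and $\eta(y)$, with $\omega_m$ colouring whichever point sits at position $m$. Then
$$h(x) - h(y) \,=\, \Ex_{\eta^+, \omega}\big[F(\eta(x), \omega) - F(\eta(y), \omega)\big],$$
where $F(\eta, \omega) = \mathbf{1}[H_R]$. The key algebraic step is to average over $\omega_m$ first: decomposing $F(\eta, \omega) = G(\eta, \omega^{\setminus m}) + \omega_m D(\eta, \omega^{\setminus m})/2$, where $G$ is the conditional crossing probability given $\omega^{\setminus m}$ and $|D|$ indicates pivotality of the $m$-th point (so $\Ex_{\omega^{\setminus m}}|D| = \Inf_m(f^\eta)$), one gets $h(x) - h(y) = \Ex_{\eta^+, \omega^{\setminus m}}[G(\eta(x), \omega^{\setminus m}) - G(\eta(y), \omega^{\setminus m})]$. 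The hope is that a Jensen inequality together with a case analysis on the pivotality of the $m$-th point in the two configurations yields the pointwise bound
$$(h(x) - h(y))^2 \,\le\, \Ex_{\eta^+}\big[\Inf_m(f^{\eta(x)}) \cdot \Inf_m(f^{\eta(y)})\big],$$
after which a Cauchy--Schwarz over the independent pair $(\eta_m, \eta_m')$ converts the product into $\Ex_{\eta_m, \eta^+}[\Inf_m(f^\eta)^2]$, as desired.

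The main obstacle is establishing that pointwise bound. The pivotality cases (where the $m$-th point is pivotal in at least one of $\eta(x), \eta(y)$) are easy to absorb into the product of influences, but moving the $m$-th point from $x$ to $y$ globally alters the Voronoi tiling, so in principle $F(\eta(x), \omega)$ and $F(\eta(y), \omega)$ can differ even when the point is non-pivotal in both configurations. Controlling this ``non-pivotal swap'' contribution seems to require either an auxiliary coupling with the enlarged $(n+1)$-point configuration $\eta^- \cup \{x, y\} \cup \eta^+$ and a careful exchange argument between $x$ and $y$, or a more subtle exploitation of the Voronoi structure to argue that any such swap forces pivotality of the neighbouring cells whose area absorbs the vacated region.
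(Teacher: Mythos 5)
Your framework (conditional variance formula, then a pointwise bound on how much the crossing probability moves when the $m$-th point is resampled) is sound, but as you yourself note, the central step is missing, and the obstacle you identify --- that relocating $\eta_m$ from $x$ to $y$ can change the tiling and the crossing indicator even when the point is non-pivotal in both configurations --- is exactly the problem your comparison cannot handle. The paper's key idea, which your proposal lacks, is to never compare two \emph{relocations} of the point directly: instead compare the configuration $\eta$ with the configuration $\eta^-$ in which the $m$-th point is \emph{deleted}. Extending $f_R^\eta$ monotonically to $\{-1,0,1\}^\eta$ (a red point is better than no point, which is better than a blue point), one gets $f_R^\eta(\omega^-) \le f_R^{\eta^-}(\omega) \le f_R^\eta(\omega^+)$, whence the pointwise claim $|q^\eta - q^{\eta^-}| \le \Inf_m(f_R^\eta)$: if presence-with-random-colour and deletion give different values of the indicator, the $m$-th point must be pivotal. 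Deletion moves no other point, so the ``non-pivotal swap'' issue simply never arises in this comparison. Technically the paper then avoids your exchangeable pair altogether: since $\Ex[q^{\eta^-}\,|\,\F_{m-1}]$ is $\F_{m-1}$-measurable, one has
\begin{equation*}
\Var\big( q_m \,\big|\, \F_{m-1} \big) \,=\, \Var\Big( \Ex\big[ q^\eta - q^{\eta^-} \,|\, \F_m \big] \,\Big|\, \F_{m-1} \Big) \,\le\, \Ex\Big[ \big( q^\eta - q^{\eta^-} \big)^2 \,\Big|\, \F_{m-1} \Big]
\end{equation*}
by $\Var(X)\le\Ex[X^2]$ and Jensen, and the pointwise claim finishes the proof after taking expectations.

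Two further remarks on your route. First, if you insist on the exchangeable-pair identity, the natural rescue is the triangle inequality through the deleted configuration, $|h(x)-h(y)| \le |h(x)-h_0| + |h_0-h(y)|$ with $h_0 := \Ex[q^{\eta^-}\,|\,\F_{m-1}]$; but squaring then costs a factor $2$ which the $\tfrac12$ in the identity does not absorb, so you would only recover the lemma with an extra constant (harmless for the application, but not the stated bound). Centering at the $\F_{m-1}$-measurable value $h_0$ rather than at an independent resample is what gives constant $1$. Second, your hoped-for product bound $(h(x)-h(y))^2 \le \Ex_{\eta^+}\big[\Inf_m(f^{\eta(x)})\,\Inf_m(f^{\eta(y)})\big]$ is not established and does not look provable by the decomposition you sketch, precisely because of the non-pivotal tiling changes; the deletion trick is the device that removes this difficulty, and without it (or an equivalent idea) the argument has a genuine gap.
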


\begin{proof}
First observe that, since $\Ex\big[ q_m - q_{m-1} \,|\, \F_{m-1} \big] = 0$ almost surely, by the conditional variance formula\footnote{That is, $\Var(X) = \Var\big( \Ex[ X \,|\, Y ] \big) + \Ex\big[ \Var( X \,|\, Y ) \big]$.} we have
\begin{equation}\label{eq:var:stepone}
\Var\big( q_m - q_{m-1} \big) \, = \, \Ex\big[ \Var\big( q_m \,\big|\, \F_{m-1} \big) \big].
\end{equation}
Now, let $\eta^-$ be obtained from $\eta$ by deleting $\eta_m$. Since $q^{\eta^-}$ does not depend on the $m^{th}$ element of $\eta$, it follows that
\begin{align*}
\Var\big( q_m \,\big|\, \F_{m-1} \big) & \, = \, \Var\Big( \Ex\big[ q^\eta \,|\, \F_m \big] - \Ex\big[ q^{\eta^-} \,|\, \F_{m-1} \big]  \,\big|\, \F_{m-1} \Big)\\
& \, = \, \Var\Big( \Ex\big[ q^\eta - q^{\eta^-} \,|\, \F_m \big] \,\big|\, \F_{m-1} \Big).
\end{align*}
Now, since $\Var(X) \le \Ex[X^2]$ for every random variable $X$, this is at most
$$\Ex\Big[ \Ex\big[ q^\eta - q^{\eta^-} \,|\, \F_m \big]^2  \,\big|\, \F_{m-1} \Big],$$
which is in turn at most
$$\Ex\Big[ \big( q^\eta - q^{\eta^-} \big)^2 \,\big|\, \F_{m-1} \Big],$$
by Jensen's inequality. We make the following claim, which completes the proof.

\medskip

\noindent \textbf{Claim:} $|q^\eta - q^{\eta^-}| \le \Inf_m(f_R^\eta)$, almost surely.

\begin{proof}[Proof of claim]
Let us write $\omega^{+}$ (resp. $\omega^{-}$) for the element of $\{-1,1\}^n$ obtained from $\omega$ by setting the $m^{th}$ coordinate equal to $1$ (resp. $-1$). Also define $f_R^{\eta^-}(\omega)$ for $\omega \in \{-1,1\}^n$ in the obvious way, by ignoring the $m^{th}$ coordinate of $\omega$. We have
\begin{align*}
q^\eta - q^{\eta^-} & \, \le \, \Pr\big( f_R^\eta(\omega) > f_R^{\eta^-}(\omega) \,|\, \eta \big)\\
& \, \le \, \Pr\big( f_R^\eta(\omega^{+}) > f_R^{\eta}(\omega^{-}) \,|\, \eta \big) \, = \, \Inf_m(f_R^\eta),
\end{align*}
since $f_R^\eta$ is monotone (as a function on $\{-1,0,1\}^\eta$).\footnote{Indeed, abusing notation slightly, we can define a function $f_R^\eta \colon \{-1,0,1\}^\eta \to \{0,1\}$ by setting $f_R^\eta(\omega) = 1$ if there is a red horizontal crossing in the Voronoi tiling defined by the set $\eta' = \{u \in \eta : \omega(u) \ne 0 \}$ with colouring $\omega' = \omega |_{\eta'}$. The function $f_R^\eta$ is monotone increasing since (from the point of view of the crossing event) a red point is better than no point, and no point is better than a blue point.} An identical calculation shows that $q^{\eta^-} - q^{\eta} \le \Inf_m(f_R^\eta)$, and so the claim follows.
\end{proof}

The lemma now follows since, by~\eqref{eq:var:stepone}, we have
\begin{align*}
\Var\big( q_m - q_{m-1} \big) & \, = \, \Ex\big[ \Var\big( q_m \,\big|\, \F_{m-1} \big) \big] \, \le \, \Ex\Big[ \Ex\big[ \big( q^\eta - q^{\eta^-} \big)^2 \,\big|\, \F_{m-1} \big] \Big] \\
& \, \le \, \Ex\Big[  \Ex\big[ \Inf_m\big( f_R^\eta \big)^2 \,\big|\, \F_{m-1} \big] \Big]  \, = \, \Ex\big[ \Inf_m\big( f_R^\eta \big)^2 \big],
\end{align*}
as required.
\end{proof}

As noted above, Theorem~\ref{thm:VarInf} follows immediately from  Observation~\ref{obs:uncorrelated} and Lemma~\ref{lem:key:VarInf}.

\pagebreak

\section{Crossing probabilities for quenched Voronoi percolation: weak bounds}\label{sec:RSW}

In this section we will prove a weaker version of Theorem~\ref{thm:RSW} for quenched Voronoi percolation in the plane, and deduce the corresponding theorem when $\eta$ is a subset of the half-plane. Crucially, these results will be sufficient to deduce a bound on the probability of the one-arm event that is strong enough for our application of the Schramm-Steif method in Section~\ref{sec:proof}.

\subsection{Crossing probabilities in the plane}\label{sec:RSW:plane}

In this section, $\eta$ will denote a Poisson process in the plane of intensity $1$, and $\omega \colon \eta \to \{-1,1\}$ will be a (uniform) random two-colouring of $\eta$.\footnote{Here $\omega$ is a colouring of $\eta$, which is countable. For ease of notation we use this convention, but remark that we are only interested in the restriction of $\omega$ to $\eta_R$, the subset of points whose cells intersect $R$, which is almost surely finite. We also continue to use $\Pr$ to denote the probability measure associated with choosing the pair $(\eta,\omega)$, and trust that this will cause the reader no confusion.} Recall that, given a rectangle $R$ with sides parallel to the axes, $H_R$ denotes the event that there is a red horizontal crossing of $R$ in the Voronoi tiling given by $\eta$, coloured by $\omega$. 

\begin{thm}\label{thm:RSW:plane}
For every rectangle $R \subset \RR^2$, there exists a constant $c > 0$, depending only on the aspect ratio of $R$, such that
$$\Pr\bigg( \Pr\big( H_R \,|\, \eta \big) \le \frac{1}{2^k} \bigg) \le \, (1-c)^k$$
for all sufficiently large $k \in \N$. 
\end{thm}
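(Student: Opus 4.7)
The plan is to reduce the theorem to a moment bound on the random variable $X$ appearing in the magic lemma (Lemma~\ref{magic:lemma}), and then to derive this moment bound from the annealed box-crossing property of~\cite{T} via a BK-type inequality. Concretely, the magic lemma provides the identity $\Pr\big(H_R\,|\,\eta\big) = \Ex\big[2^{-X}\,|\,\eta\big]$, where $X$ counts vertex-disjoint monochromatic vertical crossings of $R$. My aim is to show that the negative moment $\Ex\big[\Pr(H_R\,|\,\eta)^{-\alpha}\big]$ is finite for some $\alpha = \alpha(\rho) > 0$ depending only on the aspect ratio $\rho$ of $R$; Markov's inequality then immediately delivers the stated tail bound with $c = 1 - 2^{-\alpha}$.

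The reduction is a two-line Jensen--Markov calculation. Since $x \mapsto x^{-\alpha}$ is convex on $(0,\infty)$ for $\alpha > 0$, Jensen's inequality applied to the inner expectation gives
$$\Pr\big(H_R\,|\,\eta\big)^{-\alpha} \, = \, \Ex\big[2^{-X}\,|\,\eta\big]^{-\alpha} \, \le \, \Ex\big[2^{\alpha X}\,|\,\eta\big],$$
and taking outer expectation followed by Markov's inequality yields
$$\Pr\Big(\Pr\big(H_R\,|\,\eta\big) \le 2^{-k}\Big) \, \le \, 2^{-\alpha k}\,\Ex\big[2^{\alpha X}\big].$$
Hence it suffices to show $\Ex\big[2^{\alpha X}\big] < \infty$ for some fixed $\alpha > 0$ depending only on $\rho$.

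For the moment bound I decompose $X = X_r + X_b$, where $X_r$ (resp.\ $X_b$) is the maximum number of vertex-disjoint red (resp.\ blue) vertical crossings of $R$. For each $\eta$, the event $\{X_r \ge m\}$ asserts the disjoint occurrence, $m$ times, of the increasing event ``$R$ has a red vertical crossing''; the BK inequality in the joint Poisson--Bernoulli measure then gives $\Pr(X_r \ge m) \le \Pr(V_R^{\mathrm{red}})^m$, where $V_R^{\mathrm{red}}$ denotes the existence of a red vertical crossing of $R$. Combining planar duality (the events ``red horizontal crossing of $R$'' and ``blue vertical crossing of $R$'' are almost surely complementary) with color-switching symmetry, we obtain $\Pr(V_R^{\mathrm{red}}) = 1 - \Pr(H_R) \le 1 - c_0(\rho)$, where $c_0(\rho) > 0$ is the annealed box-crossing constant of~\cite{T}. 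A union bound yields $\Pr(X \ge 2m) \le 2(1-c_0)^m$, so $\Ex\big[2^{\alpha X}\big]$ is finite whenever $\alpha$ is chosen small enough that $2^{\alpha}\sqrt{1-c_0} < 1$.

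The main obstacle is justifying the BK step in the continuum setting: since $V_R^{\mathrm{red}}$ depends on both the Poisson positions (through the Voronoi tiling) and on the Bernoulli colors, one needs either an appropriate version of BK for the joint Poisson--Bernoulli measure or a careful discretization argument. Once this single ingredient is in place, the remainder of the proof is the short Jensen--Markov calculation outlined above, matching the ``surprisingly simple'' character of the argument promised in the introduction.
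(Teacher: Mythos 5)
Your proposal is correct and follows essentially the same route as the paper: both rest on Lemma~\ref{magic:lemma} together with the BK inequality and the annealed box-crossing property of~\cite{T} to get an exponential tail for $X$, and differ only cosmetically in the final conversion (you apply Jensen to get $\Pr(H_R\,|\,\eta)^{-\alpha} \le \Ex[2^{\alpha X}\,|\,\eta]$ and then Markov, while the paper uses $\Ex[2^{-X}\,|\,\eta] \ge 2^{-(k-1)}\Pr(X < k\,|\,\eta)$ and applies Markov to $\Pr(X \ge k\,|\,\eta)$; likewise your union bound on $X = X_r + X_b$ replaces the paper's FKG step). The single ingredient you flag as the obstacle, a BK inequality for the joint Poisson--Bernoulli measure, is exactly the paper's Lemma~\ref{lem:BK}, proved by applying Reimer's inequality conditionally on $\eta$ and then the FKG inequality, so there is no genuine gap.
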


We begin by defining the following random variable, whose value depends on both $\eta$ and $\omega$, and which counts the maximum number of vertex-disjoint\footnote{By vertex-disjoint, we mean that no two of the crossings use points of the same Voronoi cell.} vertical crossings of $R$:
\begin{multline*}
X \, = \, X(\eta,\omega) \, := \, \max\big\{ m \in \N_0 \,: \, \textup{there exist $m$ vertex-disjoint, monochromatic}\\
\textup{vertical crossings $\{\gamma_1,\ldots,\gamma_m\}$ of $R$} \big\}.
\end{multline*}

The following lemma will be a key tool in our proof of Theorem~\ref{thm:RSW:plane}.

\begin{lemma}\label{magic:lemma}
For almost every $\eta$,
$$\Pr\big( H_R \,|\, \eta \big) \, = \, \Ex\big[ 2^{-X} \,|\, \eta \big].$$
\end{lemma}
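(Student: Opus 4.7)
The plan is to combine the standard planar duality for Voronoi percolation with an averaging argument over monochromatic recolourings of a maximum family. Given $\eta$, for each colouring $\omega$ I would let $M(\omega) = \{\gamma_1,\dots,\gamma_{X(\omega)}\}$ be a canonically chosen maximum family of vertex-disjoint monochromatic vertical crossings of $R$. The identity will drop out once we establish that (i) $H_R(\omega)$ holds if and only if every path of $M(\omega)$ is coloured red, and (ii) the $2^{X(\omega)}$ monochromatic recolourings of the paths of $M(\omega)$ form an equivalence class on which $X$ is constant.

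For (i), the forward direction uses planar duality: a red horizontal crossing of $R$ and a blue vertical crossing of $R$ cannot coexist (they would meet in a common Voronoi cell), so any blue path of $M(\omega)$ would preclude $H_R$. For the reverse direction, I would argue by contradiction. Suppose every $\gamma_i$ is red and yet a blue vertical crossing $\gamma^\ast$ exists. Then either $\gamma^\ast$ is vertex-disjoint from $M(\omega)$, in which case $M(\omega)\cup\{\gamma^\ast\}$ contradicts the maximality of $M(\omega)$, or $\gamma^\ast$ shares a cell with some $\gamma_i$, in which case that cell would have to be simultaneously red (as it lies on $\gamma_i$) and blue (as it lies on $\gamma^\ast$). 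Both possibilities being absurd, no such $\gamma^\ast$ exists.

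For (ii), let $\omega'$ be obtained from $\omega$ by independently re-choosing a common colour for the cells of each $\gamma_i$, leaving cells outside $M(\omega)$ fixed. Writing $X = X_r + X_b$ for the numbers of vertex-disjoint red and blue vertical crossings in a given colouring, and $r,b$ for the numbers of red and blue paths in $M(\omega)$, maximality of $M(\omega)$ forces $X_r(\omega)=r$ and $X_b(\omega)=b$; a short exchange argument then shows that flipping a single $\gamma_i$ from red to blue decreases $X_r$ by exactly one and increases $X_b$ by exactly one, since any alternative would yield more than $X(\omega)$ disjoint monochromatic crossings already in $\omega$. Hence $X$ is constant on the orbit, the orbit has exactly $2^{X}$ elements, and exactly one of them has every $\gamma_i$ red. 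Averaging the resulting conditional probability $2^{-X}$ of $H_R$ over orbits yields
\[
\Pr\big( H_R \,|\, \eta \big) \,=\, \Ex\big[ 2^{-X} \,|\, \eta \big],
\]
as required. The main delicate point is the invariance claim in (ii), which ensures the orbits genuinely partition $\{-1,1\}^\eta$; both (i) and (ii) rest on essentially the same maximality argument.
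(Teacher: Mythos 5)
Your step (i) is fine and matches the observation the paper also uses. The gap is in step (ii): it is simply not true that recolouring a single path $\gamma_i$ of a maximum family (leaving all other cells fixed) changes $X_r$ and $X_b$ by exactly one each, and your exchange argument does not rule out the bad scenario, because the new monochromatic crossings that can appear after the flip \emph{use the recoloured cells of $\gamma_i$ themselves} and therefore do not correspond to monochromatic crossings already present in $\omega$; no contradiction with the maximality of $X(\omega)$ arises. Concretely, suppose $\gamma_1$ is red and there are two disjoint blue partial paths, one to the left of $\gamma_1$ running from the top of $R$ down to a cell adjacent to a low cell of $\gamma_1$, and one to the right running from a cell adjacent to a high cell of $\gamma_1$ down to the bottom of $R$, neither being a full crossing and no blue crossing existing in $\omega$. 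Flipping the cells of $\gamma_1$ to blue then creates \emph{two} disjoint blue crossings (each splicing a different segment of $\gamma_1$ onto one of the partial paths), so $X$ increases. Such configurations are easily realised by Voronoi tilings, so $X$ is not invariant on your ``orbits''; moreover, even when $X$ happens to be preserved, the canonical family $M(\omega')$ of the recoloured configuration need not coincide with $M(\omega)$, so the relation you define is not obviously symmetric or transitive and the orbits need not partition $\{-1,1\}^\eta$ into classes of size $2^X$ with exactly one all-red element. Both failures destroy the counting that yields $2^{-X}$.

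The paper avoids exactly this trap. It fixes a canonical family via the standard left-to-right exploration (discovering left-most monochromatic paths from the bottom side), and the colour-switching operation associated to a discovered path $\gamma_j$ is not ``flip the cells of $\gamma_j$'' but ``flip the colours of \emph{all} cells on or to the right of $\gamma_j$''. This global flip is an involution on colourings under which the exploration algorithm outputs exactly the same set of paths, so the event $\{X=k,\ \text{paths} = (\gamma_1,\dots,\gamma_k)\}$ is preserved and one gets the exact identity $\Pr(\Pi=\sigma \mid X=k,\eta)=2^{-k}$ for the colour sequence $\Pi$; combined with your (correct) step (i), summing over $k$ gives the lemma. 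To repair your proof you would need to replace your single-path recolouring by this kind of half-plane (``everything to the right'') switching, or otherwise establish a genuine bijection between colourings with prescribed path colours — the invariance claim as you state it is false.
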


As noted in the Introduction, we have found this lemma to be surprisingly powerful, and expect it to have many other applications. Despite this, the lemma is very easy to prove; indeed, it follows almost immediately from a basic (and well-known) fact about `colour-switching', see~\eqref{eq:colourswitching} below. For those readers who are unfamiliar with colour-switching, we will begin by giving a brief introduction. 

Consider the event that $X = k$, i.e., that there exist $k$, but not $k+1$, vertex-disjoint, monochromatic vertical crossings of $R$. The following standard algorithm provides a method of finding such paths. First, we discover the left-most monochromatic path (in the coloured Voronoi tiling of $R$ given by the pair $(\eta,\omega)$) starting from the left-most cell which intersects the lower side of the rectangle. If it reaches the top of the rectangle, then we add this monochromatic path to our collection; otherwise we discover the whole monochromatic component of our starting-point. In either case, we then discover the left-most monochromatic path entirely to the right of the already-discovered cells, starting from the next available cell on the lower side of $R$ (if it exists). Repeating this process until we reach the right-side of $R$, we obtain a collection $(\gamma_1,\ldots,\gamma_k)$ of disjoint monochromatic crossings.

An important feature of the algorithm above is that it allows us to use so-called `colour-switching' arguments, see e.g.~\cite{BN,Sm}. The crucial observation is that, for a given $\eta$, and a given collection of paths $(\gamma_1,\ldots,\gamma_k)$ obtained via the algorithm, there is a bijection between the set of colourings in which $\gamma_j$ is red, and those in which it is blue. Indeed, if we swap the colours of all cells that are on or to the right of $\gamma_j$, then the algorithm produces exactly the same set of paths. More generally, writing $\Pi \in \{-1,1\}^{k}$ for the sequence of colours of the paths $(\gamma_1,\ldots,\gamma_k)$, we have the following simple fact. For every $\sigma \in \{-1,1\}^k$, we have
\begin{equation}\label{eq:colourswitching}
\Pr\big( \Pi = \sigma \,\big|\, X = k, \, \eta \big) = \frac{1}{2^k}
\end{equation}
almost surely. Lemma~\ref{magic:lemma} is an almost immediate consequence of~\eqref{eq:colourswitching}.

\begin{proof}[Proof of Lemma~\ref{magic:lemma}]
Observe that the event $H_R$ holds if and only if all monochromatic vertical paths are red. By~\eqref{eq:colourswitching}, it follows that
\begin{align*}
\Pr\big( H_R \,|\, \eta \big) & \,=\, \sum_{k = 0}^\infty \Pr\Big( \Pi = (1,\ldots,1) \,\big|\, X = k, \, \eta \Big) \Pr\Big( X = k \,\big|\, \eta \Big)\\
& \,=\, \sum_{k = 0}^\infty \frac{ \Pr(X = k \, | \, \eta )}{2^k}\, = \, \Ex\big[ 2^{-X} \,|\, \eta \big],
\end{align*}
as required.
\end{proof}

In order to conclude to obtain the proof of Theorem~\ref{thm:RSW:plane}, we only need to show that $X$ cannot be too large. This will be a consequence of three properties of annealed Voronoi percolation: the FKG and BK inequalities, and the box crossing property, proved recently by the fourth author~\cite{T}. An event $A$ that depends on the pair $(\eta,\omega)$ is said to be \emph{red-increasing} if removing a blue point or adding a red point cannot cause the status of $A$ to change from true to false. For a proof of the following lemma, see~\cite[Lemma~8.14]{BRbook}. 

\begin{lemma}[The FKG inequality for annealed Voronoi percolation]\label{FKG}
Let $A$ and $B$ be red-increasing events. Then
$$\Pr(A \cap B ) \, \ge \, \Pr(A) \cdot \Pr(B),$$
and moreover the reverse inequality holds if $B$ is replaced by its complement $B^c$.
\end{lemma}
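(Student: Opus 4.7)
The plan is to reduce the claim to the classical FKG inequality for finite product measures on a distributive lattice. First, I would use the colouring to decompose the input: thin the Poisson process by colour, writing $\eta = \eta_r \sqcup \eta_b$, where $\eta_r$ and $\eta_b$ are independent Poisson point processes of intensity $1/2$ each, representing the red and blue points respectively. The law of $(\eta,\omega)$ is then equivalent to the law of the pair $(\eta_r,\eta_b)$. Under this decomposition, a red-increasing event $A$ is monotone increasing in $\eta_r$ and monotone decreasing in $\eta_b$ with respect to set inclusion; equivalently, $A$ becomes monotone increasing in the joint configuration if we reverse the natural partial order on the blue component.

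Next I would discretize. Fix a large box $\Lambda \subset \RR^2$ (chosen large enough to make the truncation error on $A \cap B$ negligible) and a mesh $\delta > 0$, partition $\Lambda$ into $N = |\Lambda|/\delta$ cells, and replace $(\eta_r, \eta_b)$ by an independent assignment of a state in $\{-1, 0, +1\}$ to each cell, with probabilities approximately $\delta/2$, $1 - \delta$, $\delta/2$ (blue / empty / red). Equip $\{-1, 0, +1\}^N$ with the coordinatewise order induced by $-1 < 0 < +1$; then, at the discrete level, red-increasing events pull back to monotone increasing cylinder events, because changing a cell from blue to empty, or from empty to red, can only help. Because the lattice is a product of totally ordered chains, the FKG lattice condition holds automatically for the product measure, and the Ahlswede--Daykin inequality yields $\Pr(A \cap B) \ge \Pr(A) \cdot \Pr(B)$ in this Bernoulli approximation.

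I would then pass to the limit $\delta \downarrow 0$ and $\Lambda \uparrow \RR^2$. The Bernoulli site process converges weakly to the superposition $\eta_r \cup \eta_b$ since the probability that any cell contains two or more Poisson points is $O(\delta)$, so all marginal and joint probabilities for cylinder approximations of $A$ and $B$ converge. The only subtlety is continuity: one has to check that with probability one the Voronoi tessellation defined by $\eta$ is non-degenerate (no four points cocircular, no cell touching the boundary of $R$ tangentially, and so on), so that the indicators of $A$ and $B$ are a.s.\ continuous under small perturbations of the configuration. This is standard for the red-increasing events in continuum percolation we care about.

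Finally, the reverse statement with $B^c$ follows purely formally: writing
\[
\Pr(A \cap B^c) \, = \, \Pr(A) - \Pr(A \cap B) \, \le \, \Pr(A) - \Pr(A)\Pr(B) \, = \, \Pr(A)\Pr(B^c),
\]
we obtain the negative correlation of $A$ with $B^c$ immediately from the main inequality. I expect the main obstacle to be the bookkeeping for the Bernoulli-to-Poisson limit — in particular, verifying that the three-state cylinder events one works with at mesh $\delta$ really are monotone in the lattice order and really do converge, as $\delta \to 0$, to the continuum red-increasing events $A$ and $B$. Everything else is a direct quotation of the classical product-measure FKG.
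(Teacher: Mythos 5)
The paper itself does not prove this lemma: it is quoted from Bollob\'as--Riordan~\cite[Lemma~8.14]{BRbook}, so there is no internal proof to compare against, and your discretization argument is the standard way such statements are established (and is in the same spirit as the cited source). The skeleton is sound: colour-thinning into independent red and blue Poisson processes, a three-state Bernoulli approximation on a fine grid ordered by $-1<0<+1$ (blue $<$ empty $<$ red), the observation that a product measure on a product of chains trivially satisfies the FKG lattice condition, and a limiting argument; the deduction of $\Pr(A\cap B^c)\le\Pr(A)\Pr(B^c)$ from the main inequality is correct and purely formal.

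Two points in the middle need to be pinned down. First, the per-cell state in your lattice must be \emph{only} the symbol in $\{-1,0,+1\}$: fix a deterministic placement rule (say, the point sits at the cell centre) and pull $A$, $B$ back through that map, so the discrete events are genuine increasing subsets of $\{-1,0,+1\}^N$ under product measure. If instead the uniform position of the point inside its cell is carried along as part of the cell's state, positive association already fails at the level of a single cell: conditionally on the cell containing at most one point, the red-increasing events ``there is a red point in $U$'' and ``there is a red point in $V$'', with $U,V$ disjoint subsets of the cell, are negatively correlated; and the alternative of conditioning on candidate positions leaves you needing $\Ex[\Pr(A\mid \mathrm{pos})\Pr(B\mid \mathrm{pos})]\ge \Ex[\Pr(A\mid \mathrm{pos})]\,\Ex[\Pr(B\mid \mathrm{pos})]$, which is a correlation inequality of exactly the kind being proved. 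Second, your limit step (weak convergence plus a.s.\ continuity of the indicators, plus truncation to $\Lambda$, cf.\ Lemma~\ref{lem:boundary}) establishes the inequality for red-increasing events whose indicators are a.s.\ insensitive to small perturbations --- crossing and arm events, which is all this paper uses --- whereas the lemma is stated for arbitrary red-increasing events. To get the general statement one needs one further standard approximation, e.g.\ noting that the conditional probability of $A$ given the configuration in $\Lambda$ (or given a fine discretization of it) is again an increasing function and converges to $\mathbf{1}_A$ by martingale convergence. Neither issue is fatal, but both should be addressed for the argument to be complete.
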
 
 
The following lemma was proved by van den Berg~\cite{B}; his proof, sketched below, can also be found in the PhD thesis of Joosten~\cite[Section~4.3]{J} (who also refers to van den Berg). The corresponding inequality in the discrete setting is a celebrated result of van den Berg and Kesten~\cite{BK}.
 
\begin{lemma}[The BK inequality for annealed Voronoi percolation]\label{lem:BK}
Let $A$ and $B$ be red-increasing events. Then
$$\Pr(A \circ B) \, \le \, \Pr(A) \cdot \Pr(B),$$
where $A \circ B$ denotes the event that $A$ and $B$ occur disjointly.\footnote{For general events one must be a little careful in defining disjoint occurrence, but for events involving crossings the definition is straightforward: the crossings must be vertex-disjoint.}
\end{lemma}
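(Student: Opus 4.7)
The plan is to reduce to the classical discrete van den Berg--Kesten inequality by discretizing both the point process and the colouring, and then passing to the limit. First, fix a large box $B_L \subset \RR^2$ containing all the cells relevant to $A$ and $B$, tile $B_L$ with small squares $(Q_i)_{i=1}^N$ of side length $\delta > 0$, and replace the restriction of $(\eta,\omega)$ to $B_L$ by a sequence of independent random variables $(Z_i^\delta)_{i=1}^N$, where $Z_i^\delta = 0$ (no point in $Q_i$) with probability $1 - \delta^2$, and $Z_i^\delta = \pm 1$ (a red or blue point placed at the centre of $Q_i$) each with probability $\delta^2/2$. As $\delta \to 0$, the coloured Voronoi tilings produced by $(Z_i^\delta)$ converge in distribution to those produced by $(\eta,\omega)$ on $B_L$.

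Second, for each fixed $\delta$ the law of $(Z_i^\delta)_{i=1}^N$ is a product measure. Equip $\{-1,0,1\}$ with the order $-1 < 0 < 1$, so that red-increasing events become monotone in the standard sense (exactly as in the footnote of Section~\ref{sec:VarInf}: a red point is better than no point, which is better than a blue point). Splitting each coordinate $Z_i^\delta$ into a pair of Bernoulli variables --- one indicating whether $Q_i$ is occupied, and, conditional on occupation, one indicating the colour --- embeds the problem into a product space $\{0,1\}^{2N}$, on which the classical van den Berg--Kesten inequality~\cite{BK} gives $\Pr_\delta(A \circ B) \le \Pr_\delta(A) \Pr_\delta(B)$. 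The notion of vertex-disjoint occurrence in the Voronoi model matches the discrete notion of disjoint occurrence, because each $Q_i$ holds at most one point, so two witnesses use disjoint Voronoi cells if and only if they use disjoint coordinates of the discretization.

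The main obstacle, and the only delicate step, is the passage to the limit $\delta \to 0$. One must check that $\Pr_\delta(A) \to \Pr(A)$, $\Pr_\delta(B) \to \Pr(B)$, and $\Pr_\delta(A \circ B) \to \Pr(A \circ B)$. This hinges on the fact that the coloured Voronoi tiling depends continuously on the point configuration outside a measure-zero set, and on the standard observation that, almost surely, for all sufficiently small $\delta$ no two points of $\eta \cap B_L$ lie in the same square $Q_i$. Approximating $A$ and $B$ beforehand by events depending only on finitely many points of a bounded window reduces these limits to a finite-dimensional weak-convergence argument. Finally, letting $L \to \infty$ removes the restriction to $B_L$ and yields the claimed inequality.
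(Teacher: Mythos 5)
Your reduction to the classical BK inequality breaks at the encoding step. With one bit recording whether $Q_i$ is occupied and a second bit recording the colour of the point given occupation, a red-increasing event is \emph{not} an increasing function on $\{0,1\}^{2N}$: raising the occupation bit while the colour bit designates ``blue'' inserts a blue point, which is precisely the kind of change that a red-increasing event is allowed to be hurt by. So the van den Berg--Kesten inequality for monotone events does not apply to the product space as you have set it up. The step can be repaired --- for instance, encode each three-state coordinate as a \emph{monotone} function of two independent bits (say $g(0,0)=-1$, $g(0,1)=g(1,0)=0$, $g(1,1)=+1$, with the two Bernoulli parameters chosen so that the $\pm 1$ states each have probability $\delta^2/2$), after which red-increasing events really are increasing and disjoint occurrence is preserved under the pullback; or invoke Reimer's inequality, which requires no monotonicity at all --- but as written the central step is invalid. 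Separately, the limit $\Pr_\delta(A\circ B)\to\Pr(A\circ B)$ (and likewise for $A$ and $B$) is only asserted: you would need to show that these events are continuity sets for the coloured tiling (a.s.\ stability of the combinatorial structure of the Voronoi diagram under small perturbations, no ties, etc.) and that they are, with high probability, determined by a bounded window; this is believable but is where most of the work in a complete proof along these lines would lie.

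For comparison, the paper's proof (due to van den Berg) avoids discretization entirely by conditioning on $\eta$: given $\eta$, the only remaining randomness is the colouring, which is already a product measure on $\{-1,1\}^{\eta}$, so Reimer's inequality applies verbatim and gives $\Pr(A\circ B\mid\eta)\le\Pr(A\cap\ol{B}\mid\eta)$, where $\ol{B}$ denotes $B$ with the colours reversed. Taking expectation over $\eta$ and applying the annealed FKG inequality (Lemma~\ref{FKG}) to the red-increasing event $A$ and the red-decreasing event $\ol{B}$ yields $\Pr(A\cap\ol{B})\le\Pr(A)\Pr(\ol{B})=\Pr(A)\Pr(B)$, by colour symmetry. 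This quenched trick sidesteps both of the delicate points in your approach: no monotone encoding of a three-state space is needed (Reimer holds for arbitrary events), and no weak-convergence limit is needed.
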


\begin{proof}
Let $\ol{B}$ denote the event corresponding to $B$ with colours reversed. We have
\begin{align*}
\Pr(A \circ B) & \, = \, \Ex\big[ \Pr( A \circ B \,|\, \eta) \big] \, \le \, \Ex\big[ \Pr( A \cap \ol{B} \,|\, \eta) \big]\\
& \, = \, \Pr( A \cap \ol{B} ) \, \le \, \Pr(A) \Pr(\ol{B}) \, = \, \Pr(A) \Pr(B),
\end{align*}
where the first inequality follows from Reimer's inequality~\cite[Theorem~1.2]{Reimer}, and the second inequality follows from Lemma~\ref{FKG}.
\end{proof}

Our final tool is the following result of Tassion~\cite{T}. 

\begin{lemma}[The box-crossing property for annealed Voronoi percolation]\label{lem:Vincent:BXP}
There exists a constant $c_0 > 0$, depending only on the aspect ratio of $R$, such that
$$\Pr\big( H_R  \big) > c_0.$$
\end{lemma}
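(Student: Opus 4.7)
The plan is to follow Tassion's approach [T], exploiting the translation, rotation, and scale invariance of the intensity-$1$ Poisson process together with the self-duality of the colouring at $p=1/2$. Since scaling $R$ by $s$ is equivalent to changing the intensity to $1/s^2$, the content of the lemma is that crossing probabilities are bounded below uniformly in a scale parameter; equivalently, crossing probabilities of rectangles of a fixed aspect ratio must remain bounded away from $0$ as the side lengths tend to infinity.

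The base case is the square. For $Q = [0,L]^2$, the colour flip $\omega \mapsto -\omega$ preserves the joint law of $(\eta,\omega)$, and a standard planar topological argument (of the type developed in~\cite{BR,BRbook}) shows that almost surely exactly one of the events \emph{there is a red horizontal crossing of $Q$} and \emph{there is a blue vertical crossing of $Q$} occurs. Hence $\Pr(H_Q) = 1/2$ for every $L > 0$.

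The heart of the proof is then to pass from this square bound to a bound for a rectangle of aspect ratio $2{:}1$, after which iterated FKG-gluing takes care of arbitrary aspect ratios. Following Tassion, I would introduce, for a square $Q$ of side $L$, an \emph{angular} function $\alpha(t)$ measuring the probability of a red path inside $Q$ from the left side to the sub-interval of length $t$ centred at the midpoint of the right side. Using only the rotational and translational invariance of the Poisson process, the FKG inequality (Lemma~\ref{FKG}), and the base case $\Pr(H_Q) = 1/2$, one derives a differential/functional inequality for $\alpha$ whose only conclusion compatible with $\alpha(L) \ge 1/2$ is that $\alpha(t^*)$ is bounded below by a positive constant for some $t^* < L$. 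Two such partial crossings, in rotated and translated copies of $Q$, can then be combined via FKG into a red horizontal crossing of a $2{:}1$ rectangle.

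The principal obstacle is implementing Tassion's differential-inequality argument cleanly in the continuum Voronoi setting. One must reformulate the angular function and the symmetries underlying the inequality for partial paths that terminate at random cells rather than at lattice vertices, and verify that rigid-motion invariance of the Poisson point process, together with the FKG and BK inequalities (Lemmas~\ref{FKG} and~\ref{lem:BK}), really does suffice to reproduce the argument. Boundary effects arising from cells whose centres lie just outside $R$ are a minor technical nuisance that can be absorbed by a slight enlargement of the rectangle without changing the stated conclusion.
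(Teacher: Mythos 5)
The paper does not prove this lemma at all: it is imported verbatim as the main theorem of Tassion~\cite{T}, so the ``proof'' in the paper is a citation. Your proposal instead attempts to reprove Tassion's theorem, and it has a genuine gap at precisely the point where that theorem is hard. The base step is fine: colour symmetry plus the a.s.\ topological dichotomy between a red horizontal and a blue vertical crossing gives $\Pr(H_Q)=1/2$ for every square $Q$. But the central step --- ``one derives a differential/functional inequality for $\alpha$ whose only conclusion compatible with $\alpha(L)\ge 1/2$ is that $\alpha(t^*)$ is bounded below'' --- is not an argument, and no single-scale argument of that shape is known. With only FKG, colour symmetry and rigid-motion invariance, a square-crossing probability of $1/2$ at scale $L$ is not known to yield a lower bound for $2{:}1$ crossings at the same scale; if it did, RSW would be essentially trivial for every symmetric planar model. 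Tassion's actual proof is multi-scale: he selects a balanced landing height $\alpha_n$ on the side of the square by a continuity/intermediate-value argument, glues crossings via the square-root trick and FKG when $\alpha_n$ stays proportionally small, and must separately treat the degenerate case in which crossings land only near the corners, which forces a comparison of the relevant quantities at scale $n$ with those at larger scales. Moreover, the FKG-plus-symmetry part of that scheme by itself only produces long crossings along a sparse sequence of scales --- this is essentially where Bollob\'as and Riordan~\cite{BR} stopped, as the paper's own footnote records --- whereas the lemma as stated requires uniformity in the scale of $R$ (the constant depends only on the aspect ratio while the Poisson intensity is fixed). To bridge that gap Tassion additionally exploits the spatial independence of the Poisson process between disjoint regions, an ingredient that appears nowhere in your outline; FKG, BK and symmetry alone are not known to suffice, and in fact the BK inequality you invoke plays no role in this lemma.

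So your sketch is a reasonable table of contents for~\cite{T}, but the two steps carrying all the difficulty --- the multi-scale analysis of the landing heights and the independence argument upgrading a subsequence of scales to all scales --- are exactly the ones you defer as ``the principal obstacle.'' Unless you intend to reproduce that work in full, the correct treatment here is the one the paper adopts: quote the result from~\cite{T}.
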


We are now ready to prove the weak box-crossing property for quenched Voronoi percolation in the plane.


\begin{proof}[Proof of Theorem~\ref{thm:RSW:plane}]
Given the pair $(\eta,\omega)$, define $X^+$ (resp.\@ $X^-$) to be the maximal number of disjoint red (resp.\@ blue) paths from top to bottom in $R$, so in particular $X = X^+ + X^-$. By Lemma~\ref{lem:Vincent:BXP} and the BK inequality for annealed Voronoi percolation, there exists a constant $c_0 > 0$, depending only on the aspect ratio of $R$, such that 
$$\Pr\big( X^+ \ge k \big) \, \le \, (1-c_0)^k$$
for all $k \ge 0$, and similarly $\Pr( X^- \ge k ) \le (1-c_0)^k$. Since the events $\big\{ X^+ \ge i \big\}$ and $\big\{ X^- < j \big\}$ are both red-increasing, it follows by the FKG inequality that
\begin{align}\label{eq:6}
\Pr\big(X\ge k\big) & \, \le \, \sum_{i+j\ge k}  \Pr\Big( \big\{ X^+ \ge i \big\} \cap \big\{ X^-\ge j \big\} \Big) \nonumber \\
& \, \le \, \sum_{i+j\ge k} (1-c_0)^{i+j} \, \le \, \frac{(1 - c)^k}{2}
\end{align}
for some constant $c > 0$, if $k$ is sufficiently large. Now, by Lemma~\ref{magic:lemma} we have
$$\Pr\big( H_R \,|\, \eta \big) \, = \, \Ex\big[ 2^{-X} \,|\, \eta \big] \, \ge \, \left(\frac12\right)^{k-1}\Pr\big( X < k \,|\, \eta \big),$$
almost surely. Thus, by Markov's inequality and~\eqref{eq:6}, we obtain
$$\Pr\bigg( \Pr\big( H_R \,|\, \eta \big) \le \left( \frac12\right)^{k} \bigg) \, \le \, \Pr\bigg( \Pr\big( X \ge k \,|\, \eta \big) \ge \frac12 \bigg) \, \le \, (1 - c)^k,$$
as required.
\end{proof}

\subsection{Quenched crossing probabilities in the half-plane}\label{sec:RSW:halfplane}

In order to bound one-arm events starting at points near the boundary of $R$, we will require a result analogous to Theorem~\ref{thm:RSW:plane} for a Poisson point process $\eta$ of intensity $1$ restricted to the half-plane
$$\HH \, := \, \big\{ (x,y) \in \RR^2 : x \ge 0 \big\}.$$ 
For each rectangle $R \subset \HH$ with sides parallel to the axes, let $H^*_R$ denote the event that there is a red horizontal crossing of $R$ in the Voronoi tiling of $\HH$ given by $\HH \cap \eta$, coloured by $\omega$.

\begin{thm}\label{thm:RSW:halfplane}
For every rectangle $R \subset \HH$, there exists a constant $c > 0$, depending only on the aspect ratio of $R$, such that 
$$\Pr\bigg( \Pr\big( H^*_R \,|\, \eta \big) \le \frac{1}{2^k} \bigg) \le \, (1-c)^k,$$
for all sufficiently large $k \in \N$. 
\end{thm}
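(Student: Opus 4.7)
The plan is to mimic the proof of Theorem~\ref{thm:RSW:plane} step by step, with each ingredient replaced by its half-plane analog. First, define $X^*(\eta, \omega)$ to be the maximum number of vertex-disjoint monochromatic vertical crossings of $R$ in the Voronoi tiling of $\HH$ given by $(\eta, \omega)$. The proof of Lemma~\ref{magic:lemma} carries over verbatim to show that
\[\Pr\big( H^*_R \,|\, \eta \big) \,=\, \Ex\big[ 2^{-X^*} \,|\, \eta \big]\]
almost surely: the left-most path discovery algorithm and the associated colour-switching bijection are entirely local constructions, and see no difference between $\RR^2$ and $\HH$. Likewise, the FKG and BK inequalities (Lemmas~\ref{FKG} and~\ref{lem:BK}) hold for annealed Voronoi percolation on $\HH$ with essentially the same proofs, and planar self-duality at $p = 1/2$ remains valid for rectangles $R \subset \HH$.

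Assuming an annealed box-crossing estimate $\Pr(H^*_R) \ge c_0$ for some $c_0 > 0$ depending only on the aspect ratio, the rest of the argument is essentially that of Theorem~\ref{thm:RSW:plane}. Decomposing $X^* = X^{*+} + X^{*-}$ into red and blue vertical crossings and using the BK inequality together with self-duality, we have $\Pr(X^{*\pm} \ge k) \le (1-c_0)^k$. The FKG inequality then yields $\Pr(X^* \ge k) \le (1-c)^k/2$ for some $c > 0$ and all sufficiently large $k$. The half-plane magic lemma gives $\Pr( H^*_R \,|\, \eta ) \ge 2^{-(k-1)} \Pr( X^* < k \,|\, \eta )$ almost surely, so Markov's inequality yields
\[\Pr\bigg( \Pr\big( H^*_R \,|\, \eta \big) \le 2^{-k} \bigg) \,\le\, \Pr\bigg( \Pr\big( X^* \ge k \,|\, \eta \big) \ge \frac{1}{2} \bigg) \,\le\, (1-c)^k,\]
as required.

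The step I expect to be the main obstacle is establishing the annealed half-plane box-crossing estimate $\Pr(H^*_R) \ge c_0$. Tassion's theorem (Lemma~\ref{lem:Vincent:BXP}) does not apply directly, because the half-plane Voronoi tiling differs substantively from the plane tiling near $\{x = 0\}$: cells with centres close to the boundary can extend far along it, and their law differs from that of bulk cells. To handle these boundary effects, I would split into two cases. When $R$ lies at distance at least $\mathrm{height}(R)$ from the boundary, a standard coupling argument shows that the half-plane and plane Voronoi tilings agree on $R$ with probability bounded away from $0$, so Lemma~\ref{lem:Vincent:BXP} gives the crossing estimate immediately. For rectangles near or touching the boundary, one can glue together crossings of far-from-boundary sub-rectangles using FKG, together with auxiliary estimates controlling the cells along the boundary. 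Alternatively, one can identify the half-plane Voronoi process on $\HH$ with the restriction of the plane Voronoi process built from the reflected point process $\eta \cup \{(-x,y) : (x,y) \in \eta\}$, and adapt Tassion's arguments (which rely only on FKG-type tools rather than on the Poisson structure) to this symmetric, non-Poisson process.
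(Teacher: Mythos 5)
Your reduction of the half-plane theorem to an annealed half-plane crossing estimate is exactly right, and that part matches the paper: the colour-switching identity $\Pr(H^*_R \mid \eta) = \Ex[2^{-X^*} \mid \eta]$, the decomposition $X^* = X^{*+} + X^{*-}$, BK, FKG, and Markov all go through verbatim, and the paper itself says the proof of Theorem~\ref{thm:RSW:halfplane} is identical to that of Theorem~\ref{thm:RSW:plane} once Lemma~\ref{lem:Vincent:BXP} is replaced by the bound $\Pr(H^*_R) > c_1$ (Lemma~\ref{lem:annealedBXP:halfplane}).

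The genuine gap is precisely the step you flag and then only sketch: the annealed half-plane estimate is where essentially all the work of this section lies, and neither of your two suggestions amounts to a proof. The hard case is a rectangle whose left side lies on the line $x=0$; there, a horizontal crossing must reach the boundary through the region where the tiling law is genuinely different, so ``gluing far-from-boundary crossings using FKG'' does not address the actual difficulty of connecting a bulk crossing to the boundary line, and ``auxiliary estimates controlling the cells along the boundary'' is exactly the missing content. The reflection trick is correct as a description of the half-plane tessellation, but ``adapting Tassion's arguments'' to it is not routine: his proof uses translation invariance, rotation/reflection symmetries and independence of the Poisson process, and the boundary destroys the $90^\circ$ symmetry that RSW-type arguments lean on, so this route would require substantial new work. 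The paper's actual argument is different and quantitative: it couples the half-plane and full-plane tilings outside a strip of width $\ell = (\log L)^{2/3}$ (Lemma~\ref{lem:boundary}), observes that a crossing of $R' = \{x \ge \ell\}$ which fails to extend to a crossing of $R$ forces a three-arm event $A^{(3)}(j)$ from one of the $L/\ell$ boundary squares $S_j$, and then bounds $\Pr(A^{(3)}(j)) \le \Pr(A^{(1)}(j)) \cdot \Pr(A^{(2)}(j))$ by colour-switching plus the BK inequality. The one-arm factor is polynomially small in $L$ by the annealed box-crossing property in the plane, while the sum over $j$ of the two-arm boundary factors is at most $2^{C\ell}$, via a tunnelling argument and the key observation that at most one of the interface events $E(j)$ can occur; since $2^{C\ell} = L^{o(1)}$, the total boundary error is $L^{-c}$ and Lemma~\ref{lem:annealedBXP:halfplane} follows. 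Without an argument of this kind (or a genuinely worked-out alternative), your proposal does not yet prove the theorem.
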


The proof of Theorem~\ref{thm:RSW:halfplane} is identical to that of Theorem~\ref{thm:RSW:plane}, except we will need to replace Lemma~\ref{lem:Vincent:BXP} with the following bound in the annealed setting. 

\begin{lemma}\label{lem:annealedBXP:halfplane}
For every rectangle $R$, there exists a constant $c_1 > 0$, depending only on the aspect ratio of $R$, such that
$$\Pr\big( H^*_R \big) > c_1.$$
\end{lemma}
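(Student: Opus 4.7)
My plan is to deduce Lemma~\ref{lem:annealedBXP:halfplane} from the plane box-crossing Lemma~\ref{lem:Vincent:BXP} via a coupling with a full-plane Poisson process. By vertical translation invariance I may assume $R = [a,a+L]\times[-\rho L/2,\rho L/2]$ with $a \ge 0$. Let $\tilde\eta$ be a Poisson process of intensity $1$ on $-\HH$, independent of $\eta$, and set $\eta' := \eta \cup \tilde\eta$, which is a Poisson process of intensity $1$ on $\RR^2$, with uniform colouring $\omega' \supset \omega$.

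The key structural observation is that the Voronoi tiling of $\HH$ by $\eta$ is a \emph{coarsening} of the restriction to $\HH$ of the Voronoi tiling of $\RR^2$ by $\eta'$: each $\HH$-cell of some $u \in \eta$ is the union of the $\eta'$-cell of $u$ (intersected with $\HH$) together with the parts in $\HH$ of those $\tilde v$-cells ($\tilde v \in \tilde\eta$) whose nearest point of $\eta$ is $u$. This is the bridge between the plane tiling of $\eta'$ (to which Lemma~\ref{lem:Vincent:BXP} applies) and the half-plane tiling of $\eta$ (the object of interest).

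I would split into two regimes. In the regime $a \le L$ (rectangle near the boundary), I apply Lemma~\ref{lem:Vincent:BXP} to the symmetric ``doubled'' rectangle $\hat R := [-(a+L),a+L] \times [-\rho L/2,\rho L/2]$ in the $\eta'$-tiling. Its aspect ratio is at most $4/\rho$, hence bounded in terms of the aspect ratio of $R$ alone, and Lemma~\ref{lem:Vincent:BXP} supplies a constant $c = c(\rho) > 0$ with $\Pr(\text{red horizontal crossing of }\hat R\text{ in the }\eta'\text{-tiling}) \ge c$. Restricted to $\HH$, any such crossing contains a red path from the $y$-axis to $\{x = a+L\}$ that in particular crosses $R$ horizontally. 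In the complementary regime $a > L$ (rectangle far from the boundary), a Poisson tail estimate together with the FKG inequality (Lemma~\ref{FKG}) shows that, with positive probability, no cell of the $\eta'$-tiling with centre in $-\HH$ reaches $R$, so the two tilings coincide on $R$ and Lemma~\ref{lem:Vincent:BXP} applied directly to $R$ in the $\eta'$-tiling yields the bound.

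The hard step is the \emph{transfer}: converting a red horizontal crossing in the $\eta'$-tiling (restricted to $\HH$) into a red horizontal crossing in the $\HH$-tiling. Cells with centres in $\HH$ carry over directly, since their colours in the two tilings agree, but cells with centres in $-\HH$ are absorbed into $\HH$-cells whose colours, drawn from $\omega$, are independent of the colours $\tilde\omega$ they carry in the $\eta'$-tiling. My plan is to handle this by conditioning on $\eta'$ and running a colour-switching argument on $\tilde\omega$ in the spirit of Lemma~\ref{magic:lemma}: averaged over $\tilde\omega$, one shows that a red horizontal crossing in the $\eta'$-tiling lifts to a red horizontal crossing in the $\HH$-tiling with probability bounded below by a constant depending only on $\rho$, completing the proof.
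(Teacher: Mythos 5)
Your reduction of the far-from-boundary case to Lemma~\ref{lem:Vincent:BXP} is fine (it is essentially the paper's Lemma~\ref{lem:boundary}), but the essential case is the one the paper reduces to, namely $R$ touching the line $x=0$, and there your proof has a genuine gap: the ``hard step'' you name at the end --- lifting a red crossing in the $\eta'$-tiling (restricted to $\HH$) to a red crossing in the half-plane $\eta$-tiling --- is asserted, not proved, and I do not see how a colour-switching argument ``in the spirit of Lemma~\ref{magic:lemma}'' does it. Colour-switching, as used in the paper, equidistributes the colour sequence of disjoint monochromatic paths \emph{within one fixed tiling}; here you must compare two different tilings whose colourings only partially agree. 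Concretely, the crossing of the doubled rectangle, restricted to $\HH$, may pass through many cells of $\tilde\eta$ (there is no a priori bound: the path can hug the $y$-axis, so the number of such cells can be of order $L$), and the portions of $\HH$ they cover are covered in the half-plane tiling by $\eta$-cells whose colours $\omega$ are independent of the colours $\tilde\omega$ that made the $\eta'$-path red. Forcing those $\eta$-cells to be red costs $2^{-k}$ for $k$ cells, so without controlling $k$ you get nothing better than an exponentially small bound, not a constant $c_1(\rho)$. There is also a conditioning subtlety you cannot wave away: the event that the $\eta'$-crossing exists already depends on $\omega$ (the colours of the $\eta$-cells), so you cannot simply resample or average over the colours of the boundary cells independently of the crossing you constructed. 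Controlling how strongly the crossing is forced to use the boundary strip is exactly the boundary-effect problem, and your plan contains no mechanism for it. (A secondary inaccuracy: the ``coarsening'' claim is not quite right --- the part of a $\tilde v$-cell inside $\HH$ need not lie in a single half-plane cell, and in any case a point $x\in\HH$ lies in the half-plane cell of the $\eta$-point nearest to $x$, not of the $\eta$-point nearest to $\tilde v$.)

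For comparison, the paper avoids the full-plane coupling of colours altogether: it pushes the rectangle inward by only $\ell=(\log L)^{2/3}$, where Lemma~\ref{lem:boundary} guarantees the tilings of $R'$ by $\eta$ and $\HH\cap\eta$ coincide with probability $1-L^{-3}$, and then bounds the probability that a crossing of $R'$ fails to extend through the thin boundary strip by a sum over boundary squares $S_j$ of three-arm events $A^{(3)}(j)$. That sum is controlled by factoring $\Pr(A^{(3)}(j))\le\Pr(A^{(1)}(j))\Pr(A^{(2)}(j))$ via colour-switching and the BK inequality, using the annealed one-arm decay $\Pr(A^{(1)}(j))\le L^{-2c}$, and bounding $\sum_j\Pr(A^{(2)}(j))\le 2^{C\ell}$ by a tunneling argument together with the observation that the auxiliary events $E(j)$ are mutually exclusive. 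If you want to salvage your coupling approach, you would need to supply an estimate of precisely this type (that the $\eta'$-crossing can be taken to meet the boundary region only in a bounded neighbourhood of its left endpoint, or that three-arm events from boundary squares are unlikely), at which point you would have reproduced the substance of the paper's argument rather than bypassed it.
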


When the rectangle $R$ is sufficiently far from the boundary, Lemma~\ref{lem:annealedBXP:halfplane} follows from Lemma~\ref{lem:Vincent:BXP}, since the Voronoi tilings of $R$ is (with high probability) the same in both cases. We begin with an easy lemma that makes this statement precise.

\begin{lemma}\label{lem:boundary}
Given $\lambda > 0$, let $L > 0$ be sufficiently large. Let $\eta$ be a Poisson point process in the plane of intensity 1, and let $R$ be a $\lambda L \times L$ rectangle with
$$\min\{ x : (x,y) \in R \} \ge (\log L)^{2/3}.$$
Then the Voronoi tilings of $R$ induced by $\eta$ and by $\HH \cap \eta$ are non-identical with probability at most $1/L^3$.
\end{lemma}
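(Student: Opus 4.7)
The plan is to reduce the event that the two tilings differ on $R$ to a simple coverage question for the Poisson process. First I would observe that the two tilings of $R$ disagree at a point $v \in R$ if and only if the nearest point of $\eta$ to $v$ has $x$-coordinate strictly less than $0$ (since otherwise the nearest point of $\eta$ and the nearest point of $\HH \cap \eta$ coincide). Because every $v \in R$ satisfies $x(v) \ge (\log L)^{2/3}$, such a `bad' neighbour would sit at Euclidean distance at least $(\log L)^{2/3}$ from $v$. Therefore it is enough to show that, with probability at least $1 - L^{-3}$, every $v \in R$ has some point of $\eta \cap \HH$ within distance $(\log L)^{2/3}$.

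To prove this coverage statement, I would tile $R$ by axis-aligned squares $Q_1,\ldots,Q_N$ of side length $s := C\sqrt{\log L}$, where $C$ is a constant chosen (once and for all) large enough that $e^{-s^{2}} \le L^{-10}$. The number of squares is $N = O(L^{2}/\log L)$. Since $R \subset \HH$, each $Q_i$ lies entirely inside $\HH$ and has area $s^{2}$, so the Poisson intensity $1$ gives
$$\Pr(\eta \cap Q_i = \emptyset) \, = \, e^{-s^{2}} \, \le \, L^{-10}.$$
A union bound over the $N$ squares yields that every $Q_i$ is non-empty with probability at least $1 - L^{-3}$, as $N \cdot L^{-10} \ll L^{-3}$ for $L$ large.

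On this event, for every $v \in R$ the square $Q_i$ containing $v$ is non-empty, so there is a point of $\eta \cap \HH$ within distance $s\sqrt{2} \le 2C\sqrt{\log L}$ of $v$. Since $2C\sqrt{\log L}$ is much smaller than $(\log L)^{2/3}$ once $L$ is sufficiently large, this comfortably beats the distance threshold $(\log L)^{2/3}$ simultaneously for all $v \in R$, and the lemma follows.

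There is really no serious obstacle: the argument is an elementary union bound for a Poisson process. The only small point requiring care is the parameter choice, which must simultaneously satisfy $s^{2} \gtrsim \log L$ (so the union bound beats $L^{-3}$) and $s \ll (\log L)^{2/3}$ (so the covered distance beats the slab width $(\log L)^{2/3}$). The gap between the exponents $1/2$ and $2/3$ leaves plenty of room, which is precisely why the hypothesis $x(v) \ge (\log L)^{2/3}$ is formulated in this slightly unusual way.
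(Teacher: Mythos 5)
Your proposal is correct and follows essentially the same route as the paper: the paper also reduces the event that the tilings differ to the existence of a point of $R$ whose nearest point of $\eta$ lies outside $\HH$, hence to an empty ball of radius $(\log L)^{2/3}$ centred in $R$, and then invokes standard Poisson estimates. Your grid-and-union-bound argument with squares of side $C\sqrt{\log L}$ simply makes that standard estimate explicit (and the minor tiling/rounding issue is harmless), so there is nothing to correct.
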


\begin{proof}
If the Voronoi tilings of $R$ induced by $\eta$ and by $\HH \cap \eta$ are non-identical, then there must be a point $u \in R$ such that the closest point of $\eta$ to $u$ lies outside $\HH$. This implies that there is an empty ball of radius $(\log L)^{2/3}$ centred in $R$, the probability of which (by standard properties of Poisson processes) is super-polynomially small in $L$.
\end{proof}

We will assume from now on that $R$ is a $\lambda L \times L$
rectangle with $L$ sufficiently large, and such that the left-hand
side of $R$ is on the line $x = 0$. (Note that, by choosing $c_1$
sufficiently small, this may be assumed without loss of generality.)
The idea of the proof of Lemma~\ref{lem:annealedBXP:halfplane} is as
follows. Set $\ell = (\log L)^{2/3}$, and partition\footnote{We can
  deal with rounding issues by increasing $\ell$ slightly if
  necessary.} the set $\{ (x,y) \in R : 0 \le x \le \ell \}$ into
$L/\ell$ squares $S_1,\ldots,S_{L/\ell}$. Now observe that if there is
a red horizontal crossing of the rectangle $R' = \{ (x,y) \in R : x
\ge \ell \}$, but no red horizontal crossing of $R$, then there must
exist a square $S_j$ such that the following `3-arm event' holds (see
Figure~1, below). 

\begin{figure}[htbp]
  \centering
  \includegraphics[width=5cm]{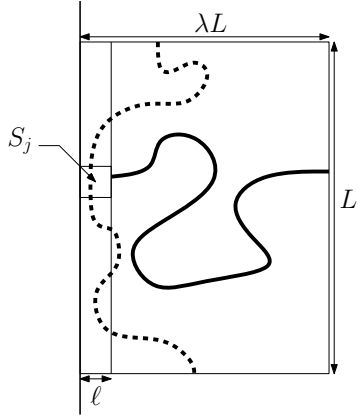}
  \caption{If there exists a red horizontal crossing in $R'$, but no
    red horizontal crossing of $R$, then a `3-arm event' must hold from
    a square $S_j$. (Red and blue paths are represented by solid and
    dotted paths, respectively.) }
  \label{fig:A3}
\end{figure}

\begin{defn}\label{def:eventA3}
For each $1 \le j \le L/\ell$, let $A^{(3)}(j)$
denote the event that the following hold:
\begin{itemize}
  \item[$(a)$] there is a red path from $S_j$ to the right-hand side of $R$ that is contained in $R'$.
  \item[$(b)$] there are blue paths from $S_j$ to the top and the bottom of $R$ that are contained in~$R$.
\end{itemize}
\end{defn}


By Lemma~\ref{lem:boundary} and the observations above, we have 
\begin{equation}\label{eq:3}
\Pr\big(H^*_{R}\big) \,\ge\, \Pr\big(H_{R'}\big) - \frac{1}{L^3} -\, \sum_{j=1}^{L/\ell} \Pr\big( A^{(3)}(j) \big).
\end{equation}
Thus, by Lemma~\ref{lem:Vincent:BXP}, it will suffice to prove the following lemma.

\begin{lemma}\label{lem:3arm}
There exists $c > 0$ such that
$$\sum_{j=1}^{L/\ell} \Pr\big( A^{(3)}(j) \big) \le L^{-c}$$
for all sufficiently large $L > 0$.
\end{lemma}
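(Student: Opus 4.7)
The plan is to show that $\Pr\bigl(A^{(3)}(j)\bigr) \le (\ell/L)^{c_0}$ with $c_0 > 1$ uniformly in $j$, and then sum the $L/\ell$ terms to obtain $\sum_j \Pr(A^{(3)}(j)) \le (\ell/L)^{c_0-1} \le L^{-c}$.

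I would set up a dyadic multi-scale decomposition. The three arms demanded by $A^{(3)}(j)$ automatically live on vertex-disjoint Voronoi cells (cells of opposite colours are distinct), and for every dyadic scale $2^i \ell \in [\ell, L]$ the event $A^{(3)}(j)$ forces three vertex-disjoint alternating-colour arms (one red, two blue) to cross the (half-)annulus $A_i$ of inner radius $2^i\ell$ and outer radius $2^{i+1}\ell$ around (the centre of) $S_j$, intersected with $R$. Boundary issues caused by $S_j$ sitting on the left side of $R$ are absorbed via Lemma~\ref{lem:boundary}: for any portion of $A_i$ at distance $\ge \ell = (\log L)^{2/3}$ from the boundary of $\HH$, the Voronoi tiling induced by $\HH\cap\eta$ coincides with the one induced by $\eta$ off an event of probability $O(L^{-3})$, so that the annealed box-crossing property (Lemma~\ref{lem:Vincent:BXP}) can be applied as if we were in the full plane.

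For a single annulus, the crux is a uniform upper bound $\alpha<1$ on the probability of three alternating-colour arms. By Lemma~\ref{lem:Vincent:BXP} applied to several rectangles aligned with $A_i$, combined with Lemma~\ref{FKG} to glue the resulting crossings into a (semi)circuit, each colour produces a monochromatic (semi)circuit around the inner boundary of $A_i$ with probability $\ge p$ for some $p>0$ depending on the aspect ratio. A red (semi)circuit blocks every blue arm and vice versa, so by the opposite-monotonicity version of Lemma~\ref{FKG} (as used in the proof of Lemma~\ref{lem:BK}) the probability of three alternating arms in $A_i$ is at most $\alpha := (1-p)^2$. Combining the scales via BK (Lemma~\ref{lem:BK}) across vertex-disjoint annuli yields $\Pr\bigl(A^{(3)}(j)\bigr) \le \alpha^{\lfloor\log_2(L/\ell)\rfloor} = (\ell/L)^{c_0}$ with $c_0 = \log_2(1/\alpha)$.

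The main obstacle is the quantitative requirement $c_0>1$, equivalently $\alpha<1/2$. A na\"ive approach of bounding each arm separately via one-arm estimates is insufficient: the effective one-arm exponent accessible from BK+FKG+RSW alone does not exceed $1/3$, so the product of three one-arm bounds is defeated by the $L/\ell$ loss from summing over $j$. The three-arm structure must instead be exploited jointly---this is the Kesten-style ``universal half-plane three-arm exponent exceeds one'' phenomenon---and $\alpha<1/2$ is obtained by applying Lemma~\ref{lem:Vincent:BXP} to rectangles of sufficiently large aspect ratio so that the circuit probability $p$ at each scale is close enough to $1$. With $c_0>1$ established, summing over $j\in\{1,\dots,L/\ell\}$ immediately gives $\sum_j \Pr\bigl(A^{(3)}(j)\bigr) \le (\ell/L)^{c_0-1} \le L^{-c}$ for some $c>0$, as required.
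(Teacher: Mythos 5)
Your plan stands or falls on the single quantitative claim that the per-scale constant can be pushed below $1/2$, i.e.\ that the alternating three-arm half-plane exponent exceeds $1$, and the mechanism you offer for this does not work. Lemma~\ref{lem:Vincent:BXP} gives only an unspecified constant $c_0>0$ depending on the aspect ratio, so to make the (semi)circuit probability $p$ ``close to $1$'' you are forced to use annuli of large modulus $\rho$; but then each ``scale'' consumes a factor $\rho$ of distance, and the number of scales drops from $\log_2(L/\ell)$ to $\log_\rho(L/\ell)$. Since (by independence of disjoint sub-annuli in the Poisson setting) $1-p_\rho$ is roughly $\rho^{-\beta}$ with $\beta$ the unknown, possibly tiny, modulus-$2$ ``no-circuit'' exponent, the net exponent your multi-scale product yields is about $2\beta$, \emph{independent of} $\rho$: enlarging the aspect ratio does not bootstrap the exponent above $1$. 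The Kesten-type statement that the half-plane three-arm exponent exceeds $1$ is a genuinely extra input, proved by combinatorial/counting arguments rather than by RSW combined with FKG/BK, and your sketch supplies no proof of it. (Two further technical points: Lemma~\ref{lem:BK} is stated for red-increasing events, whereas your per-annulus three-arm events are non-monotone, so ``combining the scales via BK'' needs to be replaced by a separation/independence argument; and your per-scale bound $(1-p)^2$ does use the reverse-FKG correctly, so that part is fine.)

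There is a second gap at the boundary. The two blue arms in $A^{(3)}(j)$ are allowed to run inside the strip $\{0\le x\le\ell\}$, so a red blocking semicircuit must reach the half-plane boundary; in that strip Lemma~\ref{lem:boundary} gives no comparison with the full-plane tiling (it only applies at distance at least $\ell$ from the boundary), and annealed crossing estimates for boxes touching the boundary are precisely the content of Lemma~\ref{lem:annealedBXP:halfplane}, which the paper deduces \emph{from} the present lemma --- invoking such estimates here would be circular. The paper's proof sidesteps both difficulties: colour-switching plus BK factors $\Pr(A^{(3)}(j))\le\Pr(A^{(1)}(j))\cdot\Pr(A^{(2)}(j))$; the one-arm factor $\Pr(A^{(1)}(j))\le L^{-2c}$ requires only an arbitrarily small exponent and lives in $R'$, where Lemma~\ref{lem:boundary} does apply; and the boundary two-arm events are handled by tunnelling to the mutually exclusive events $E(j)$, which gives $\sum_j\Pr(A^{(2)}(j))\le 2^{C\ell}=L^{o(1)}$. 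In effect, the ``exponent at least $1$'' that your approach needs is obtained there by a counting trick, not by any quantitative RSW estimate; without an argument of that kind your proposal does not close.
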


In the proof of Lemma~\ref{lem:3arm}, we will use the following two events: 
\begin{itemize}
\item $A^{(1)}(j)$ denotes the event that there is a red path from $S_j$ to the right-hand side of $R$ that is completely contained inside $R'$.
\item $A^{(2)}(j)$ denotes the event that there is a red path from $S_j$ to the top of $R$, and a blue path from $S_j$ to the bottom of $R$, both of which are completely contained inside~$R$. 
\end{itemize}
We are now ready to prove Lemma~\ref{lem:3arm}.

\begin{proof}[Proof of Lemma~\ref{lem:3arm}]
We first claim that, for each $j \in [L/\ell]$, we have
\begin{equation}\label{eq:A3vsA1A2}
\Pr\big( A^{(3)}(j) \big) \, \le \, \Pr\big( A^{(1)}(j) \big) \cdot \Pr\big( A^{(2)}(j) \big).\end{equation}
To prove~\eqref{eq:A3vsA1A2}, we apply colour-switching and the BK inequality for annealed Voronoi percolation (Lemma~\ref{lem:BK}). Indeed, the three paths in Definition~\ref{def:eventA3} are of alternating colours, so must be vertex-disjoint. Moreover, if such vertex-disjoint, monochromatic paths exist, then by colour-switching they are each equally likely to be red or blue.\footnote{To be slightly more precise, if such (vertex-disjoint, monochromatic) paths exist then we may choose the `left-most' such triple $(\gamma_1,\gamma_2,\gamma_3)$, and prove a result corresponding to~\eqref{eq:colourswitching} by switching the colours of all cells that are on or to the right of $\gamma_j$ for each $j \in \{1,2,3\}$. The left-most triple is obtained by choosing $\gamma_j$ to be the left-most monochromatic path to the top/right/bottom of $R$ that is entirely to the right of $\gamma_{j-1}$.}  Thus, letting $B^{(2)}(j)$ denote the event that there are vertex-disjoint red paths from $S_j$ to the top and bottom of $R$, both of which are completely contained inside $R$, we have
$$\Pr\big( A^{(3)}(j) \,|\, \eta \big) \, = \, \Pr\big( A^{(1)}(j) \circ B^{(2)}(j) \,|\, \eta \big) \quad \textup{and} \quad \Pr\big( A^{(2)}(j) \,|\, \eta \big) \, = \, \Pr\big( B^{(2)}(j) \,|\, \eta \big)$$
for almost all $\eta$. Taking expectation over $\eta$, and noting that $A^{(1)}(j)$ and $B^{(2)}(j)$ are both red-increasing events, we have 
$$\Pr\big( A^{(3)}(j) \big) \, \le \, \Pr\big( A^{(1)}(j) \big) \cdot \Pr\big( B^{(2)}(j) \big) \, = \, \Pr\big( A^{(1)}(j) \big) \cdot \Pr\big( A^{(2)}(j) \big)$$
by the BK inequality, as claimed. 

By~\eqref{eq:A3vsA1A2}, the lemma is an immediately consequence of the following two claims. 

\medskip
\noindent \textbf{Claim 1:} $\Pr\big( A^{(1)}(j) \big) \le L^{-2c}$ for some constant $c > 0$. 

\begin{proof}[Proof of claim]
By Lemma~\ref{lem:boundary}, the probability that the Voronoi tiling of $R'$ by $\HH \cap \eta$ differs from that by $\eta$ is at most $1/L^3$. The claim therefore follows from the corresponding statement in the plane, which is a standard consequence of Lemma~\ref{lem:Vincent:BXP}, see~\cite[Theorem~3]{T}.
\end{proof}
  
\medskip
\noindent \textbf{Claim 2:} $\ds\sum_{j = 1}^{L/\ell}  \Pr\big( A^{(2)}(j) \big) \le 2^{C\ell}$ for some constant $C > 0$. 

\begin{proof}[Proof of claim]
For each $j \in [L/\ell]$, choose a (distinct) cell $u(j) \in \eta$ which intersects both $S_j$ and the left-hand side of $R$. Consider the event $E(j)$, illustrated in Figure~2, 
that there is a red path from $u(j)$ to the top of $R$, and a blue path from the cell $u'(j)$ immediately below $u(j)$ to the bottom of $R$, both of which are completely contained inside $R$. 

\begin{figure}[htbp]
  \label{fig:Ej}
  \centering
  \includegraphics[width=5cm]{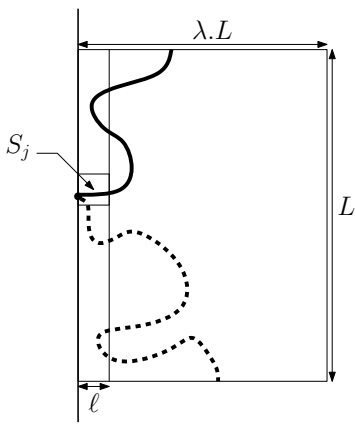}
  \caption{Illustration of the event $E(j)$: there exist a red path from a cell $u(j)$ intersecting $S_j$ and the left boundary of $R$ to the top of $R$, and a blue path from the cell $u'(j)$ immediately below $u(j)$ to the bottom of $R$.}
\end{figure}

We claim that
\begin{equation}\label{eq:10}
    \Pr\big( A^{(2)}(j)\big) \le \, 2^{C\ell} \cdot \Pr\big( E(j)\big)
\end{equation}
for each $j \in [L/\ell]$. To prove this, we simply use brute force to tunnel from the boundary of $S_j$ to $u(j)$. Indeed, as long as, for every pair of points $\{v,w\}$ on the boundary of $S_j$, there are vertex-disjoint paths (in the Voronoi tiling of $S_j$) from $u(j)$ to $v$ and from $u'(j)$ to $w$, each of length at most $C\ell/2$, then we can connect $u(j)$ and $u'(j)$ to the endpoints of the paths guaranteed by the event $A^{(2)}(j)$ with probability at least $2^{-C\ell}$. But such paths exist unless the Poisson process is much denser in $S_j$ than one would expect, and this occurs with probability that is super-polynomially small in $L$.

Finally, simply note that, since at most one of the events $E(j)$ can occur, we have
$$\sum_{j = 1}^{L/\ell} \Pr\big( A^{(2)}(j)\big) \le \, 2^{C\ell} \sum_{j = 1}^{L/\ell} \Pr\big( E(j) \big) \le \, 2^{C\ell}$$
as claimed.
\end{proof}

Combining Claims~1 and~2 with~\eqref{eq:A3vsA1A2}, and recalling that $\ell = (\log L)^{2/3}$, we obtain 
$$\sum_{j=1}^{L/\ell} \Pr\big( A^{(3)}(j) \big) \, \le \, \sum_{j=1}^{L/\ell} \Pr\big( A^{(1)}(j) \big) \cdot \Pr\big( A^{(2)}(j) \big) \, \le \, L^{-2c} \cdot 2^{C\ell} \, \le \, L^{-c}$$
if $L$ is sufficiently large, as required.
\end{proof}

For completeness, let us spell out how to deduce Lemma~\ref{lem:annealedBXP:halfplane} from Lemmas~\ref{lem:Vincent:BXP} and~\ref{lem:3arm}. 

\begin{proof}[Proof of Lemma~\ref{lem:annealedBXP:halfplane}]
By~\eqref{eq:3}, and Lemmas~\ref{lem:Vincent:BXP} and~\ref{lem:3arm}, we have 
$$\Pr\big(H^*_{R}\big) \,\ge\, \Pr\big(H_{R'}\big) - \frac{1}{L^3} -\, \sum_{j=1}^{L/\ell} \Pr\big( A^{(3)}(j) \big) \,\ge\, c_0 - \frac{1}{L^3} - L^{-c} \, \ge \, c_1$$
if $L$ is sufficiently large, as required.
\end{proof}

As noted above, Theorem~\ref{thm:RSW:halfplane} follows by repeating the proof of Theorem~\ref{thm:RSW:plane}, using Lemma~\ref{lem:annealedBXP:halfplane} in place of Lemma~\ref{lem:Vincent:BXP}.

\subsection{A bound on the probability of the 1-arm event in a rectangle}

To finish the section, let us deduce the following proposition from Theorems~\ref{thm:RSW:plane} and~\ref{thm:RSW:halfplane}. Let $R \subset \RR^2$ be a rectangle of area $n$, and let $\eta \subset R$ be a set of $n$ points, each chosen uniformly at random, and let $\omega$ be a uniform colouring of $\eta$. (Thus, the distribution of $\eta$ inside $R$ is very close to that of a Poisson process of intensity 1.) Given $u \in R$ and $d > 0$, we write $M(u,d)$ for the event (depending on $\eta$ and $\omega$) that there is a monochromatic path from $u$ to some point of $R$ at $\ell_2$-distance $d$ from $u$.

We will use the following result in Section~\ref{sec:proof} in order to bound the `revealment' of our randomized algorithm, and hence to deduce our main theorems. Since the deduction of this result from the box-crossing property is standard, we will be fairly brief with the details.

\begin{prop}\label{prop:quenched:onearm}
For every $\gamma > 0$, there exists $\eps > 0$ such that the following holds. Suppose that $d = d(n) \to \infty$ as $n \to \infty$ and let $u \in R$. Then 
$$\Pr\Big( \Pr\big( M(u,d) \,|\, \eta \big) \ge d^{-\eps} \Big) \, \le \, d^{-\gamma}$$
for all sufficiently large $n \in \N$.
\end{prop}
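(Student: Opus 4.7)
The plan is to establish this 1-arm bound via a multi-scale RSW argument transferred to the quenched setting, with Theorems~\ref{thm:RSW:plane} and~\ref{thm:RSW:halfplane} playing the role of the annealed circuit estimates. Fix $\gamma > 0$, set $K := \lfloor c_0 \log d\rfloor$ for a small constant $c_0$, and consider dyadic annuli $A_1, \ldots, A_K$ around $u$, intersected with $R$ and spaced so that thin buffer regions separate consecutive annuli. Write $M^+(u,d)$ and $M^-(u,d)$ for the red and blue arm events, whose union is $M(u,d)$. By planarity, $M^+(u,d) \subseteq \bigcap_k (E_k^b)^c$ and $M^-(u,d) \subseteq \bigcap_k (E_k^r)^c$, where $E_k^r$ (resp.\ $E_k^b$) denotes the existence of a red (resp.\ blue) path in $A_k \cap R$ separating $u$ from the complement of the disc of radius $d$---a circuit if $A_k$ lies in the interior of $R$, an arc anchored on $\partial R$ otherwise.

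In each annulus, a standard construction produces $O(1)$ rectangles of bounded aspect ratio whose simultaneous monochromatic crossings imply $E_k^r$ (or $E_k^b$). Set $Y_k^r := \Pr(E_k^r \,|\, \eta)$ and $Y_k^b := \Pr(E_k^b \,|\, \eta)$. Since $\omega$ is a product measure conditional on $\eta$, FKG bounds $Y_k^r$ below by the product of the quenched crossing probabilities of these rectangles. Applying Theorem~\ref{thm:RSW:plane} to each rectangle interior to $R$ (and Theorem~\ref{thm:RSW:halfplane}, after a suitable rotation/reflection, to each rectangle meeting $\partial R$) and then union bounding yields
$$\Pr\bigl( Y_k^r \le 2^{-C j_0} \bigr) \, \le \, p_0 \, := \, C'(1-c)^{j_0},$$
and an identical bound for $Y_k^b$, where $c, C, C' > 0$ are constants and $j_0$ is a parameter to be chosen.

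The argument now rests on two independence facts. First, because the buffers ensure that $Y_k^r$ is determined by $\eta$ restricted to a neighbourhood of $A_k$, the badness indicators $\{Y_k^r \le 2^{-Cj_0}\}_{k=1}^K$ depend on disjoint parts of $\eta$ and so are (essentially) independent; second, conditional on $\eta$, the events $\{E_k^b\}_{k=1}^K$ depend on the colours of disjoint sets of cells and so are independent. Choosing $j_0$ large enough that $p_0 < 1/4$, Chernoff's inequality gives
$$\Pr\bigl(\#\{k : Y_k^r \le 2^{-Cj_0}\} \ge K/2\bigr) \, \le \, \exp(-\alpha K) \, = \, d^{-\beta(j_0)},$$
with $\beta(j_0) \to \infty$ as $j_0 \to \infty$, and similarly for blue. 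On the complementary event $\mathcal G$ that at least $K/2$ annuli are good for each colour,
$$\Pr(M^+(u,d) \,|\, \eta) \, \le \, \prod_{k=1}^{K} (1 - Y_k^b) \, \le \, (1 - 2^{-Cj_0})^{K/2} \, = \, d^{-\eps}$$
for some $\eps = \eps(j_0, c_0) > 0$, and symmetrically for $M^-(u,d)$, so $\Pr(M(u,d)\,|\,\eta) \le 2 d^{-\eps}$ on $\mathcal G$. Since $\Pr(\mathcal G^c) \le 2 d^{-\beta(j_0)}$, choosing $j_0$ with $\beta(j_0) > \gamma$ completes the proof.

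The main obstacles are bookkeeping rather than conceptual. The trickiest points I expect to be: (i)~verifying the annular independence---$Y_k^r$ depends on $\eta$ only via the Voronoi tiling in a neighbourhood of $A_k$, controlled by standard tail bounds on Voronoi cell circumradii, so that buffers of width $(\log n)^{O(1)}$ suffice; and (ii)~the boundary case when $u$ lies near $\partial R$ or near a corner, where the circuit construction must be replaced by arcs anchored on one or two sides of $\partial R$ before Theorem~\ref{thm:RSW:halfplane} is invoked in the appropriate orientation. A minor additional issue is that Theorems~\ref{thm:RSW:plane} and~\ref{thm:RSW:halfplane} are formulated for a Poisson process of intensity $1$, whereas here $\eta$ is $n$ uniform points in a region of area $n$; a standard Poissonization argument transfers the bounds with a multiplicative loss of $O(\sqrt n)$, which is absorbed by taking $\beta(j_0)$ large enough.
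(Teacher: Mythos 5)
Your argument is essentially the paper's own proof: geometrically spaced annuli between scales of order $\sqrt d$ and $d$, quenched circuit probabilities bounded below via Harris/FKG together with Theorems~\ref{thm:RSW:plane} and~\ref{thm:RSW:halfplane}, independence across annuli of the ``good $\eta$'' events plus conditional independence of the circuit events given $\eta$, and a counting/Chernoff step to conclude; your explicit red/blue split and boundary-arc treatment correspond to the paper's (partial) blue circuits $O_j$ and its locality event $D^{(2)}_j$. The only caveat is your closing de-Poissonization remark: a global conditioning factor of $O(\sqrt n)$ cannot be absorbed by enlarging $\beta(j_0)$ when $d(n)$ grows subpolynomially in $n$, though a local comparison of the binomial and Poisson processes (which is what the paper implicitly invokes) resolves this, and the application only needs $d = \Theta(n^{1/4})$.
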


\begin{proof}
Fix a point $u \in R$, and for each $j \in \N$, let $\AA_j$ denote the square annulus, centred on $u$, with inner side-length $7^j$ and outer side-length $3 \cdot 7^j$. Let $O_j$ denote the event that there is a blue circuit around the (partial) annulus $\AA_j \cap R$. (This circuit must either be closed, or have both its endpoints on the boundary of $R$; in either case, it must separate $u$ from the exterior of $\AA_j$.) Let
$$J = \big\{ j \in \N \,:\, \sqrt{d} \le 7^{j+1} \le d \big\},$$
and consider the collection of annuli $\C = \{ \AA_j : j \in J\}$. Roughly speaking, we will show that either at least half the annuli in $\C$ contain an `unusual' collection of points, or the probability that none of the events $O_j$ occurs is at most $d^{-\eps}$.

Let $c$ be the constant in Theorems~\ref{thm:RSW:plane} and~\ref{thm:RSW:halfplane}, and fix a large constant $k \in \N$ (depending on~$c$ and~$\gamma$). For each $j \in J$, let $D^{(1)}_j$ denote the event (depending on $\eta$) that $\Pr\big( O_j \,|\, \eta \big) > 2^{-4k}$, and let $D^{(2)}_j$ denote the event that for every $z \in \AA_j$, there exists some point $x \in \eta$ at distance at most $\log d$ from $z$. Define
$$D_j \, := \, D^{(1)}_j \cap D^{(2)}_j,$$
and observe that the events $D_j$ are independent. We claim that
\begin{equation}\label{eq:Djbound}
\Pr(D_j) \, \ge \, 1 - 5(1-c)^k
\end{equation}
if $d$ is sufficiently large. To prove~\eqref{eq:Djbound}, note first (cf. Lemma~\ref{lem:boundary}) that
$$\Pr\big( D^{(2)}_j \big) \, \to \, 1$$
as $d \to \infty$. Next, observe that, by the FKG inequality\footnote{Note that we are applying this in the quenched world, so the usual FKG inequality (also known as Harris' lemma) is sufficient.} and Theorems~\ref{thm:RSW:plane} and~\ref{thm:RSW:halfplane}, we have
$$\Pr\Big( \Pr\big( O_j \,|\, \eta \big) \le 2^{-4k} \Big) \, \le \, 4(1-c)^k$$
for all sufficiently large $k \in \N$. Thus $\Pr(D_j^c) \le 4(1-c)^k + o(1) \le 5(1-c)^k$, as claimed. 

Now, let $D^*$ denote the event that $D_j$ holds for at least half of the elements $j \in J$, and observe that
$$\Pr(D^*) \, \ge \, 1 - 2^{|J|} \left( 5(1-c)^k \right)^{|J|/2} \ge \, 1 -  d^{-\gamma},$$
since $|J| = \Omega\big( \log d \big)$, and $k$ was chosen sufficiently large in terms of $\gamma$ and $c$. But for those $\eta$ such that $D^*$ holds, we have
$$\Pr\big( M(u,d) \,|\, \eta \big) \, \le \, \Pr\bigg( \bigcap_{j \in J} O_j^c \,\big|\, \eta \bigg) \, = \, \prod_{j \in J} \Pr\big( O_j^c \,|\, \eta \big) \, \le \, \big( 1 - 2^{-4k} \big)^{|J|/2} \, \le \, d^{-\eps},$$
for some $\eps > 0$, as required. 
\end{proof}

\section{The proof of the main theorems}\label{sec:proof}

In this section we will use the algorithm method of Benjamini, Kalai and Schramm~\cite{BKS} and Schramm and Steif~\cite{SS}, together with the results proved in the previous sections, in order to bound the sum of the squares of the influences, and hence deduce the following theorem. Throughout this section, let us fix a rectangle $R \subset \RR^2$. 


\begin{thm}\label{thm:BKS:rectangle}
There exists $c = c(R) > 0$ such that the following holds. Let $\eta \subset R$ be a set of~$n$ points, each chosen uniformly at random, and let $\omega \colon \eta \to \{-1,1\}$ be a uniform colouring. Then
$$\Pr\Big( \big| \Pr( H_R \,|\, \eta ) - \Pr( H_R ) \big| \ge n^{-c} \Big) \le n^{-c}$$
for all sufficiently large $n \in \N$.
\end{thm}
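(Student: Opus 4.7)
The starting point is Corollary~\ref{cor:VarInf}: it suffices to prove that
$$a(n) \;=\; \Ex\bigg[\sum_{m=1}^n \Inf_m(f_R^\eta)^2\bigg] \;\le\; n^{-c'}$$
for some fixed $c' > 0$ and all sufficiently large $n$, since the conclusion of the theorem then follows with $c = c'/3$. So the entire task reduces to proving polynomial decay of the expected sum of squared influences of the random Boolean function $f_R^\eta$.

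To bound $a(n)$, I would adapt the Schramm--Steif randomized algorithm method to the quenched Voronoi setting. For each fixed $\eta$, construct a randomized algorithm $\A^\eta$ that determines $f_R^\eta(\omega)$ by an interface exploration started from a uniformly chosen point on the boundary of $R$. The standard Schramm--Steif analysis controls the sum of squared influences in terms of the revealment $\delta^\eta := \max_u \Pr(\A^\eta\text{ queries }u \mid \eta)$, with the revealment of each cell bounded above by a quenched one-arm probability from that cell to macroscopic distance---exactly the quantity controlled by Proposition~\ref{prop:quenched:onearm}.

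The rest is polynomial bookkeeping. A union bound, applying Proposition~\ref{prop:quenched:onearm} to each of the at most $n$ cells of $\eta$ and choosing the parameter $\gamma$ in that proposition large relative to $\eps$, produces an event $\G$ of probability $\ge 1 - n^{-c''}$ on which $\delta^\eta \le n^{-\eps'}$. On $\G$ the Schramm--Steif argument gives $\sum_u \Inf_u(f_R^\eta)^2 \le n^{-c'''}$ for some $c''' > 0$; on $\G^c$ we use the trivial bound $\sum_u \Inf_u(f_R^\eta)^2 \le n$, whose contribution to $a(n)$ is at most $n \cdot n^{-c''}$ and hence negligible provided $c''$ is taken sufficiently large. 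Combining these two contributions gives $a(n) \le n^{-c'}$.

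The main obstacle is conceptually minor but requires care: Proposition~\ref{prop:quenched:onearm} must be applied simultaneously to every cell of $\eta$ at a macroscopic scale (proportional to the diameter of $R$), and the exponents $\eps', \gamma, c', c'', c'''$ must be chosen consistently so that the union bound is absorbed by the polynomial decay of the one-arm probability. As the Introduction anticipates, once the quenched box-crossing results of Section~\ref{sec:RSW}---and in particular Proposition~\ref{prop:quenched:onearm}---are in hand, the rest is a direct transcription of the Schramm--Steif method from~\cite{SS}.
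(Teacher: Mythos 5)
Your proposal follows essentially the same route as the paper: Corollary~\ref{cor:VarInf} reduces the theorem to a polynomial bound on $\Ex\big[\sum_{m=1}^n \Inf_m(f_R^\eta)^2\big]$, which you obtain exactly as in Section~\ref{sec:proof} via the Schramm--Steif revealment bound (Theorem~\ref{thm:SS}), the quenched one-arm estimate of Proposition~\ref{prop:quenched:onearm}, a union bound over the cells of $\eta$, and the trivial bound on the bad event with $\gamma$ chosen large. The only slight imprecision is the claim that the revealment of a cell is controlled by a one-arm event to \emph{macroscopic} distance: cells near the random starting point are queried without any long arm, so one must work at a mesoscopic scale (the paper uses distance $n^{-1/4}$, paying an extra $O(n^{-1/4})$ for the chance that the start-point is nearby), which is exactly the standard balancing in the argument of~\cite{SS} that you invoke.
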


Theorem~\ref{thm:BKSconj} follows immediately from Theorem~\ref{thm:BKS:rectangle} by taking $R = [0,1]^2$, and recalling that in this case $\Pr(H_R) = 1/2$. Note that Theorem~\ref{thm:RSW} also follows from Theorem~\ref{thm:BKS:rectangle}, together with the RSW theorem for annealed Voronoi percolation, which implies that $\Pr(H_R)$ is bounded away from 0 and 1 uniformly in $n$.

The randomised algorithm method was introduced in~\cite{SS} and used there to show that the are exceptional times in dynamical site percolation on the triangular lattice. Since this method is well-known, and our application is rather standard, we shall be somewhat less careful with the details than in earlier sections, focusing instead on conveying the main ideas. Given a randomized algorithm $\A$ that determines a function $f_n \colon \{-1,1\}^n \to \{0,1\}$, define the revealment of $\A$ to be
$$\delta_\A(f_n) \, := \, \max_{i \in [n]} \Pr\big( i \textup{ is queried by } \A \big).$$
Schramm and Steif~\cite{SS} proved a very powerful bound on the Fourier coefficients of a real-valued function on the hypercube $\{-1,1\}^n$ in terms of the revealment of any randomized algorithm that determines $f$. For monotone Boolean functions their result easily implies the following theorem, which will be sufficient for our purposes.


\begin{thm}[Schramm and Steif, 2010]\label{thm:SS}
Given a monotone function $f \colon \{-1,1\}^n \to \{0,1\}$, and a randomized algorithm $\A$ that determines $f$, we have
$$\sum_{m = 1}^n \Inf_m(f)^2 \, \le \, \delta_\A(f).$$
\end{thm}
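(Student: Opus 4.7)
The plan is to deduce this as a straightforward corollary of the main Fourier-theoretic inequality of Schramm and Steif. Their central result, specialised to level $1$, states that for any real-valued $g \colon \{-1,1\}^n \to \RR$ determined by a randomized algorithm $\A$,
$$\sum_{|S|=1} \hat{g}(S)^2 \, \le \, \delta_\A(g) \cdot \|g\|_2^2.$$
For a \emph{monotone} Boolean function the level-$1$ Fourier coefficients encode the influences exactly, so the bound in the theorem is obtained simply by applying this inequality to an appropriately normalized version of $f$.

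First I would pass from the $\{0,1\}$-valued function $f$ to its affine rescaling $\tilde f := 2f - 1 \colon \{-1,1\}^n \to \{-1,1\}$. The function $\tilde f$ is monotone and Boolean, is determined by the same randomized algorithm $\A$ (so $\delta_\A(\tilde f)=\delta_\A(f)$), and satisfies $\Inf_m(\tilde f)=\Inf_m(f)$ for every $m$, while $\|\tilde f\|_2^2 = 1$.

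Next I would record the standard identity that, for a monotone Boolean $\tilde f$ taking values in $\{-1,1\}$, each level-$1$ Fourier coefficient equals the corresponding influence. Splitting the expectation $\hat{\tilde f}(\{m\}) = \Ex[\tilde f \, \omega_m]$ according to the value of $\omega_m$ and invoking monotonicity,
$$\hat{\tilde f}(\{m\}) \, = \, \Pr\big(\tilde f = 1 \,\big|\, \omega_m = 1\big) \,-\, \Pr\big(\tilde f = 1 \,\big|\, \omega_m = -1\big) \, = \, \Inf_m(\tilde f).$$
Squaring, summing over $m$, and applying the Schramm--Steif inequality above to $\tilde f$ yields
$$\sum_{m=1}^n \Inf_m(f)^2 \, = \, \sum_{|S|=1} \hat{\tilde f}(S)^2 \, \le \, \delta_\A(\tilde f) \cdot \|\tilde f\|_2^2 \, = \, \delta_\A(f),$$
which is exactly what is claimed.

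The only serious work is the Schramm--Steif inequality itself, which I would use here as a black box. Its proof, the technical heart of~\cite{SS}, proceeds by conditioning on the (random) set $J_\A$ of coordinates queried by $\A$ to obtain the pointwise bound $\hat{g}(S)^2 \le \|g\|_2^2 \cdot \Pr(S \subseteq J_\A)$ via Cauchy--Schwarz, combined with a more delicate level-by-level argument to prevent the trivial sum from blowing up by a factor of $n$; none of that needs to be reproduced here, and so the present theorem reduces to the bookkeeping described above.
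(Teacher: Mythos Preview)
Your proposal is correct and is exactly the derivation the paper has in mind: the authors do not give a proof but simply remark that the Schramm--Steif Fourier bound ``easily implies'' the stated inequality for monotone Boolean functions, and your argument (rescale to $\tilde f = 2f-1$, use $\hat{\tilde f}(\{m\}) = \Inf_m(f)$ for monotone $\tilde f$, and apply the level-$1$ case of the Schramm--Steif inequality) is precisely how one fills in this remark.
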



By Theorems~\ref{thm:VarInf} and~\ref{thm:SS}, it only remains to show that, with probability at least $1 - n^{-c}$, there exists an algorithm that determines $f_R^\eta$ with revealment that is polynomially small in $n$. (Recall that $f_R^\eta \colon \{-1,1\}^\eta \to \{0,1\}$ denotes the function such that $f_R^\eta(\omega) = 1$ if and only if $H_R$ occurs.) The algorithm we will use is essentially the same as that introduced in~\cite{SS}, so we shall describe it in an intuitive (and therefore slightly non-rigorous) fashion, and refer the reader to the original paper for the details.

\begin{SSalg}
Let $\A$ be the algorithm that, given $\eta$, queries bits of $\omega$ as follows:
\begin{itemize}
\item[1.] Choose a point $x$ in the middle third of the left-hand side of $R$ uniformly at random. 
\item[2.] Explore the boundary between red and blue cells, with red on the left, starting from $x$. Here we place boundary conditions as follows: the left-hand side of $R$ is red above $x$, and blue below, and the bottom of $R$ is also blue. If this path:
\begin{itemize}
\item[$(a)$] reaches the right-hand side of $R$, then $f_R^\eta(\omega) = 1$.
\item[$(b)$] reaches the bottom of $R$, and ends at the top, then $f_R^\eta(\omega) = 0$.
\item[$(c)$] ends at the top of $R$ without reaching the bottom, then go to step 3.
\end{itemize}
\item[3.] Explore the boundary between red and blue cells, with red on the right, starting from $x$. Here we reverse the boundary conditions, i.e., the left-hand side of $R$ is blue above $x$, and red below, and the top of $R$ is also blue. If this path:
\begin{itemize}
\item[$(a)$] reaches the right-hand side of $R$, then $f_R^\eta(\omega) = 1$.
\item[$(b)$] otherwise $f_R^\eta(\omega) = 0$.
\end{itemize}
\end{itemize}
\end{SSalg}

Note that we only query bits as needed, i.e., we query those vertices whose cell we meet along one of our paths. Still following~\cite{SS}, this allows us to immediately bound the revealment of $\A$ as follows. Recall that, given $u \in R$ and $d > 0$, we write $M(u,d)$ for the event that there is a monochromatic path from $u$ to some point of $R$ at $\ell_2$-distance $d$ from $u$.
 
\begin{lemma}\label{lem:SS}
Let $\A$ be the Schramm-Steif randomized algorithm. Then
$$\delta_\A(f_R^\eta) \, \le \, \max_{u \in \eta} \Pr\Big( M\big( u, n^{-1/4} \big) \,\big|\, \eta \Big) + O\big( n^{-1/4} \big)$$
almost surely.
\end{lemma}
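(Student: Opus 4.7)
The plan is to bound, for each fixed $u \in \eta$, the conditional-on-$\eta$ probability that the Schramm--Steif algorithm queries the colour of $u$, uniformly in $u$. A bit $u$ is queried only when at least one of the two interfaces traced in steps~2 and~3 of $\A$ visits the Voronoi cell $C(u)$, so I will split this event into a ``starting point is close to $u$'' piece and a ``long monochromatic arm from $u$'' piece.

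The deterministic input is the following standard observation about interface exploration. When the interface started from $x$ first reaches $C(u)$, the red side of the already-revealed portion is a chain of red Voronoi cells running from the left-hand side of $R$ at $x$ to a red neighbour of $C(u)$, and the blue side is a corresponding chain of blue cells. If $u$ has colour $c$, then appending $C(u)$ to the $c$-side produces a monochromatic sequence of cells connecting $u$ to within one cell-diameter of $x$. Excluding the super-polynomially small probability event that some cell meeting $R$ has diameter exceeding $n^{-1/3}$, this gives the deterministic containment
\[
\big\{u \text{ is queried}\big\} \ \subseteq \ \big\{|x-u|<n^{-1/4}\big\} \ \cup \ M\big(u,\, n^{-1/4}\big),
\]
since whenever $|x-u| \ge n^{-1/4}$ the monochromatic path above exits the Euclidean ball of radius $n^{-1/4}$ around $u$.

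Granted this containment, the conclusion follows by a routine estimate. Conditionally on $\eta$, the only algorithmic randomness is the uniform start $x$ on the middle third of the left-hand side of $R$, an interval of length $\Theta_R(1)$, so the $\eta$-conditional probability of $\{|x-u|<n^{-1/4}\}$ is at most $C_R n^{-1/4}$ uniformly in $u$; the second event contributes exactly $\Pr(M(u,n^{-1/4}) \mid \eta)$. Taking a union bound and maximising over $u \in \eta$ then yields the stated inequality almost surely.

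The only (mild) obstacle is verifying the deterministic interface containment: one must check that the red/blue chain exhibited above is a genuine monochromatic path inside $R$, as opposed to one that borrows cells from the Dobrushin-type boundary conditions imposed in steps~2 and~3. This is handled exactly as in~\cite{SS}: the artificial boundary colours are never themselves queried, and the interface is well defined as the unique path separating the red cluster of the upper boundary segment from the blue cluster of the lower boundary segment in the Voronoi adjacency graph, so each side of the interface up to $C(u)$ is an honest monochromatic arm in $R$.
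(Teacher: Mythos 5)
Your argument is correct and is essentially the paper's own (sketched) proof: you split the event that $u$ is queried according to whether the random start point $x$ lies within distance $n^{-1/4}$ of $u$, bound the first case by $O(n^{-1/4})$ using the uniform choice of $x$ on a boundary segment of length $\Theta(1)$, and observe that otherwise a queried point $u$ carries a monochromatic arm to distance $n^{-1/4}$, so that $M(u,n^{-1/4})$ holds. The additional details you supply (the cell-diameter estimate and the identification of this arm with the $u$-coloured side of the explored interface, including the remark that the artificial boundary colours are never queried) simply make explicit what the paper's sketch leaves implicit, and are treated at the same level of rigour as in~\cite{SS}.
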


\begin{proof}[Sketch of the proof]
Let $u \in \eta$, and consider the probability that $u$ is queried by $\A$. First, note that the probability that the random start-point $x$ is within distance $n^{-1/4}$ of $u$ is $O(n^{-1/4})$. But if the distance between $u$ and $x$ is greater than $n^{-1/4}$, and $u$ is nonetheless queried by $\A$, then the event $M\big( u, n^{-1/4} \big)$ holds.
\end{proof}

To bound the revealment of $\A$, it will therefore suffice to bound the probability of the event $M\big( u, n^{-1/4} \big)$. This is an immediate consequence of Proposition~\ref{prop:quenched:onearm}.

\begin{lemma}\label{lem:max1arm}
For every $\gamma > 0$, there exists $c > 0$ such that
$$\Pr\Big( \max_{u \in \eta} \Pr\Big( M\big( u, n^{-1/4} \big) \,\big|\, \eta \Big) \ge n^{-c} \Big) \, \le \, \frac{1}{n^\gamma}$$
for all sufficiently large $n \in \N$.
\end{lemma}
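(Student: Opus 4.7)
The plan is to combine Proposition~\ref{prop:quenched:onearm}, which controls the one-arm event from a fixed deterministic point, with a union bound over a very fine grid and a Poisson-density estimate that couples random $\eta$-points with nearby grid points. After rescaling so that $R$ has area $\Theta(n)$ and $\eta$ is uniform on $R$ (the setup of Proposition~\ref{prop:quenched:onearm}), the distance $n^{-1/4}$ becomes $d := \Theta(n^{1/4}) \to \infty$.

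Given $\gamma > 0$, introduce a grid $\mathcal{G} \subset R$ of spacing $h := n^{-A}$ for a large constant $A$ to be chosen, so that $|\mathcal{G}| = O(n^{1+2A})$. Applying Proposition~\ref{prop:quenched:onearm} with a parameter $\gamma'$ large enough in terms of $A$ and $\gamma$ produces some $\eps > 0$ for which, by a union bound over $v \in \mathcal{G}$, with probability at least $1 - n^{-\gamma-1}$ every grid point $v$ satisfies $\Pr\big(M(v, d-h) \,\big|\, \eta\big) < d^{-\eps}$. Call this event $G$.

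Next, let $\E$ be the event that every pair of points of $\eta$ is at distance more than $2h$; a direct calculation gives $\Pr(\E^c) = O(n^2 \cdot h^2 / n) = O(n^{1-2A})$, which is at most $n^{-\gamma-1}$ once $A$ is chosen large enough. On $\E$, for each $u \in \eta$ the nearest grid point $v_u$ satisfies $|u - v_u| \le h/\sqrt{2}$, while every other $\eta$-point is at distance strictly greater than $2h - h/\sqrt{2} > h$ from $v_u$; hence $u$ is the closest $\eta$-point to $v_u$, so $v_u \in C(u)$ and $C(v_u) = C(u)$. Any monochromatic path witnessing $M(u,d)$ therefore starts from $C(v_u)$ and reaches a point at distance at least $d - h/\sqrt{2} \ge d - h$ from $v_u$, so $M(v_u, d-h)$ holds, giving $\Pr(M(u,d) \,|\, \eta) \le \Pr(M(v_u, d-h) \,|\, \eta)$.

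Combining the two steps, on $G \cap \E$ we obtain $\max_{u \in \eta} \Pr(M(u,d) \,|\, \eta) < d^{-\eps} = \Theta(n^{-\eps/4})$, while $\Pr\big((G \cap \E)^c\big) \le 2 n^{-\gamma-1} \le n^{-\gamma}$ for large $n$. Taking $c := \eps/5$ (say) yields the conclusion. The argument is essentially routine once Proposition~\ref{prop:quenched:onearm} is in hand; the only mildly delicate point, and the only place where care is really needed, is the coupling between random $\eta$-points and deterministic grid points, which is handled by making the grid polynomially finer than the typical $\eta$-spacing so that the nearest grid point and the nearest $\eta$-point almost surely coincide.
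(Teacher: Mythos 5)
Your proof is correct, but it takes a more roundabout route than the paper. The paper's own argument is essentially one line: after the same rescaling (so that $d = \Theta(n^{1/4})$), it applies Proposition~\ref{prop:quenched:onearm} and takes a union bound directly over the $n$ points $u \in \eta$, choosing the proposition's parameter $\gamma$ large enough that $n \cdot d^{-\gamma} \le n^{-\gamma'}$; the fact that the points of $\eta$ are random rather than deterministic is handled implicitly (condition on the location of the $m^{th}$ point -- the remaining points are still i.i.d.\ uniform, and the proposition's bounds are uniform in the centre $u$, so the estimate applies verbatim). You instead only invoke the proposition at deterministic locations, discretizing by a grid of polynomially fine spacing $h = n^{-A}$, union-bounding over the $O(n^{1+2A})$ grid points, and adding a minimal-separation event $\E$ to guarantee that each $u \in \eta$ has its nearest grid point $v_u$ inside $C(u)$ (convexity of the cell then lets you reroute the path through $v_u$ and conclude $M(u,d) \subseteq M(v_u,d-h)$ on $\E$). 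Both arguments ultimately rely on the same uniformity in $u$ of Proposition~\ref{prop:quenched:onearm} (your grid has polynomially many points, so you too need the ``sufficiently large $n$'' threshold and the exponent $\eps$ to be independent of $u$, which the proposition's proof indeed provides). What your version buys is that the proposition is used strictly as a black box for fixed points, making the exchangeability step explicit rather than implicit, at the cost of the extra grid/separation machinery; the bookkeeping of constants ($A$ in terms of $\gamma$, then $\gamma'$ in terms of $A,\gamma$, then $\eps$ and $c = \eps/5$) is handled consistently. One cosmetic point: $C(v_u)$ is not defined since $v_u \notin \eta$; you mean that the Voronoi cell containing $v_u$ is $C(u)$, which is what your separation estimate in fact shows.
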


\begin{proof}
Renormalizing $R$ to have area $n$, we find that distances are multiplied by $\Theta\big( \sqrt{n} \big)$, and so we may apply Proposition~\ref{prop:quenched:onearm} with $d = \Theta\big( n^{1/4} \big)$. The claimed bound now follows from the proposition, using the union bound over points $u \in \eta$.
\end{proof}

We are now ready to deduce our bound on the sum of the squares of the influences, and hence (by Theorem~\ref{thm:VarInf} and the results of~\cite{BKS} and~\cite{T}) our main theorems.

\begin{lemma}\label{lem:influences}
For every $\gamma > 0$, there exists $c > 0$ such that
$$\Pr\bigg( \sum_{m = 1}^n \Inf_m\big( f_R^\eta \big)^2 \ge n^{-c} \bigg) \, \le \, \frac{1}{n^\gamma}$$
for all sufficiently large $n \in \N$.
\end{lemma}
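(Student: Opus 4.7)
The plan is to chain together the three ingredients developed in this section: the Schramm--Steif bound (Theorem~\ref{thm:SS}), the revealment estimate for the Schramm--Steif algorithm $\A$ (Lemma~\ref{lem:SS}), and the quenched bound on the maximum one-arm probability from a point of $\eta$ (Lemma~\ref{lem:max1arm}). Given $\gamma > 0$, I will fix $\gamma$ throughout and produce the exponent $c > 0$ at the end by taking a minimum of the two exponents coming from Lemma~\ref{lem:max1arm} and the $O(n^{-1/4})$ additive term in Lemma~\ref{lem:SS}.

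First I would fix a realisation of $\eta$ and apply Theorem~\ref{thm:SS} to the monotone Boolean function $f_R^\eta \colon \{-1,1\}^\eta \to \{0,1\}$, using the randomized algorithm $\A$ defined above. Since $\A$ determines $f_R^\eta$ for every $\eta$, this gives the almost sure pointwise inequality
\[
\sum_{m=1}^n \Inf_m\big(f_R^\eta\big)^2 \, \le \, \delta_\A\big(f_R^\eta\big).
\]
Next I would insert Lemma~\ref{lem:SS} to control $\delta_\A(f_R^\eta)$, which yields, almost surely,
\[
\sum_{m=1}^n \Inf_m\big(f_R^\eta\big)^2 \, \le \, \max_{u \in \eta} \Pr\Big( M\big( u, n^{-1/4} \big) \,\big|\, \eta \Big) \, + \, O\big( n^{-1/4} \big).
\]
At this point the right-hand side is a random variable depending only on $\eta$, and the remaining task is to show that it is polynomially small in $n$ with probability at least $1 - n^{-\gamma}$.

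Finally I would apply Lemma~\ref{lem:max1arm} with the same $\gamma$: it produces a constant $c_1 > 0$ such that
\[
\Pr\Big( \max_{u \in \eta} \Pr\big( M(u, n^{-1/4}) \,|\, \eta \big) \ge n^{-c_1} \Big) \, \le \, n^{-\gamma}
\]
for all sufficiently large $n$. On the complementary event, which has probability at least $1 - n^{-\gamma}$, the displayed bound above becomes $n^{-c_1} + O(n^{-1/4})$, which is at most $n^{-c}$ for $c := \tfrac{1}{2}\min\{c_1, 1/4\}$ once $n$ is large. This is exactly the conclusion of the lemma.

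There is no real obstacle here — the work has already been done in the preceding lemmas, so this step is essentially a routine combination. The only point to be mindful of is conceptual: Theorem~\ref{thm:SS} is a deterministic statement about Boolean functions, so it must be applied conditionally on $\eta$, with all subsequent bounds holding almost surely in $\eta$; the final probabilistic conclusion then comes from averaging over $\eta$ via Lemma~\ref{lem:max1arm}.
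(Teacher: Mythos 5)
Your proposal is correct and follows exactly the same route as the paper: it chains Theorem~\ref{thm:SS}, Lemma~\ref{lem:SS} and Lemma~\ref{lem:max1arm} to bound $\sum_{m=1}^n \Inf_m(f_R^\eta)^2$ by the revealment $\delta_\A(f_R^\eta)$ and hence by $n^{-c}$ with probability at least $1 - n^{-\gamma}$. The only difference is that you spell out the choice of exponent and the conditioning on $\eta$ slightly more explicitly, which the paper leaves implicit.
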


\begin{proof}
This follows immediately from Theorem~\ref{thm:SS}, together with Lemmas~\ref{lem:SS} and~\ref{lem:max1arm}. Indeed, we have
$$\sum_{m = 1}^n \Inf_m(f_R^\eta)^2 \, \le \, \delta_\A(f_R^\eta) \, \le \, n^{-c}$$
with probability at least $1 - n^{-\gamma}$, as required.
\end{proof}

We can now deduce Theorem~\ref{thm:BKS:rectangle}.

\begin{proof}[Proof of Theorem~\ref{thm:BKS:rectangle}]
By Corollary~\ref{cor:VarInf} and Lemma~\ref{lem:influences} (applied with $\gamma = 1$, say), we have
$$\Pr\Big( \big| \Pr( H_R \,|\, \eta ) - \Pr( H_R ) \big| \ge n^{-c} \Big) \, \le \, n^{-c}$$
for some $c > 0$, and all sufficiently large $n \in \N$, as required.
\end{proof}

Finally, let us note that the theorems stated in the Introduction all follow easily.

\begin{proof}[Proof of Theorem~\ref{thm:BKSconj}]
As noted above, this is an immediate corollary of Theorem~\ref{thm:BKS:rectangle}. Indeed, simply set $R = [0,1]^2$ and observe that $\Pr(H_R) = 1/2$.
\end{proof}

Next we deduce that quenched Voronoi percolation is noise sensitive at criticality. We remark that, although we do not give the details here, it is a standard consequence of the method of~\cite{SS} that a stronger statement holds: That~\eqref{eq:NSdef} holds even if $\eps = n^{-c}$ for some (sufficiently small) constant $c > 0$.

\begin{proof}[Proof of Theorem~\ref{thm:QV:NS}]
The theorem follows immediately from Lemma~\ref{lem:influences}, together with the Benjamini-Kalai-Schramm Theorem (see~\eqref{eq:BKSthm}).
\end{proof}

Finally, we prove the quenched box-crossing property for Voronoi percolation.

\begin{proof}[Proof of Theorem~\ref{thm:RSW}]
This follows from Theorem~\ref{thm:BKS:rectangle}, together with the RSW theorem for annealed Voronoi percolation, which was proved in~\cite{T}. Indeed, for every rectangle $R$, there exists $c_0 > 0$ such that
$$c_0 \, < \, \Pr(H_R) \, < \, 1 - c_0,$$
see~\cite[Theorem~3]{T}. Hence, by Theorem~\ref{thm:BKS:rectangle}, we have, for some $c > 0$,  
\begin{equation}\label{eq:lastequation}
\Pr\Big( c < \Pr\big( H_R \,|\, \eta \big) < 1 - c \Big) \, \ge \, 1 - n^{-c}
\end{equation}
for all sufficiently large $n \in \N$.
\end{proof}

Note that, by partitioning $R$ into a bounded number of strips and taking $c$ sufficiently small, inequality~\eqref{eq:lastequation} implies that the probability of the event $\Pr\big( H_R \,|\, \eta \big) \not\in (c,1-c)$ can be made smaller than any given polynomial, as claimed.

\section*{Acknowledgements} 

The authors would like to thank Rob van den Berg for allowing us to include his proof of the BK inequality for annealed Voronoi percolation. The first author would also like to thank Elchanan Mossel and G\'abor Pete for encouraging discussions, and the second and third authors would like to thank Paul Balister and B\'ela Bollob\'as for a number of very interesting conversations about quenched Voronoi percolation.


\begin{thebibliography}{99}

\bibitem{ABBGM} D. Ahlberg, P. Balister, B. Bollob\'as, S. Griffiths and R. Morris, Noise sensitivity in Voronoi percolation and the Gilbert model with random radii, in preparation.

\bibitem{ABGM} D. Ahlberg, E. Broman, S. Griffiths and R. Morris, Noise sensitivity in continuum percolation, \emph{Israel J. Math.}, \textbf{201} (2014), 847--899.

\bibitem{A} K.S.~Alexander, The RSW theorem for Continuum Percolation and the CLT for Euclidean minimal spanning trees, \emph{Ann. Appl. Probab.}, \textbf{6} (1996), 466--494.

\bibitem{BN} V. Beffara and P. Nolin, On monochromatic arm exponents for critical $2$D percolation, \emph{Ann. Prob.}, \textbf{40} (2012), 1286--1304.

\bibitem{BKS} I.~Benjamini, G.~Kalai, and O.~Schramm, Noise sensitivity of Boolean functions and applications to percolation, \emph{Inst. Hautes Etudes Sci. Publ. Math.}, \textbf{90} (1999), 5--43.

\bibitem{B} J. van den Berg, personal communication.

\bibitem{BK} J. van den Berg and H. Kesten, Inequalities with applications to percolation and reliability, \emph{J. Appl. Prob.}, \textbf{22} (1985), 556--569.

\bibitem{BRbook} B.~Bollob\'as and O.~Riordan, Percolation, Cambridge University Press, 2006. 

\bibitem{BR} B.~Bollob\'as and O.~Riordan, The critical probability for random Voronoi percolation in the plane is $1/2$, \emph{Prob. Theory Rel. Fields}, \textbf{136} (2006), 417--468.

\bibitem{D} G.L. Dirichlet, \"Uber die Reduktion der positiven quadratischen Formen mit drei unbestimmten ganzen Zahlen, \emph{J. Reine Angew. Math.}, \textbf{40} (1850), 209--227.

\bibitem{DHN} H. Duminil-Copin, C. Hongler and P. Nolin, Connection probabilities and RSW-type bounds for the two-dimensional FK Ising model, \emph{Comm. Pure App. Math.}, \textbf{64} (2011), 1165--1198. 

\bibitem{ES} B. Efron and C. Stein, The Jackknife Estimate of Variance, \emph{Ann. Statist.}, \textbf{9} (1981), 586--596. 


\bibitem{GPS} C.~Garban, G.~Pete and O.~Schramm, The Fourier spectrum of critical percolation, \emph{Acta Math.}, \textbf{205} (2010), 19--104.

\bibitem{GS} C.~Garban and J.E.~Steif, Noise Sensitivity of Boolean Functions and Percolation, Cambridge University Press, 2014. 

\bibitem{HPS} O. H\"aggstr\"om, Y. Peres and J. Steif, Dynamical Percolation, \emph{Ann. l'Inst. Henri Poincar\'e, Prob. Stat.}, \textbf{33} (1997), 497--528.

\bibitem{J} M. Joosten, Random fractals and scaling limits in percolation, PhD thesis, Vrije Universiteit Amsterdam, 2012. Available at http://hdl.handle.net/1871/32792.

\bibitem{LP} T.M. Liebling and L. Pournin, Voronoi Diagrams and Delaunay Triangulations: Ubiquitous Siamese Twins, \emph{Documenta Math.}, Extra Vol. ISMP, (2012) 419--431. 

\bibitem{LS} E.~Lubetzky and A.~Sly, Critical Ising on the Square Lattice Mixes in Polynomial Time, \emph{Comm. Math. Phys.}, \textbf{313} (2012), 815--836.

\bibitem{Reimer} D. Reimer, Proof of the van den Berg--Kesten Conjecture, \emph{Combin. Prob. Computing}, \textbf{9} (2000), 27--32.

\bibitem{Roy} R. Roy, The Russo-Seymour-Welsh theorem and the equality of critical densities and the `dual' critical densities for continuum percolation on $\RR^2$, \emph{Ann. Prob.}, \textbf{} (1990), 1563--1575.

\bibitem{R} L.~Russo, A note on percolation, \emph{Z. Wahrscheinlichkeitstheorie Verw. Geb.}, \textbf{43} (1978), 39--48.

\bibitem{SS} O.~Schramm and J.E.~Steif, Quantitative noise sensitivity and exceptional times for percolation, \emph{Ann. Math.}, \textbf{171} (2010), 619--672. 

\bibitem{SW} P.D.~Seymour and D.J.A.~Welsh, Percolation probabilities on the square lattice, \emph{Ann. Discrete Math.}, \textbf{3} (1978), 227--245.

\bibitem{Sm} S. Smirnov, Critical percolation in the plane: conformal invariance, Cardy's formula, scaling limits, \emph{C.~R.~Acad. Sci., Ser. I, Math.}, \textbf{333} (2001), 239--244.

\bibitem{S} J.E.~Steif, A survey on dynamical percolation, Fractal  Geometry and Stochastics IV, Progress in Probability, \textbf{61}, pp. 145--174, Birkhauser, 2009.

\bibitem{T} V.~Tassion, Crossing probabilities for Voronoi percolation, submitted.

\bibitem{V} G.F.~Voronoi, Nouvelles applications des param\`etres continus \`a la th\'eorie des formes quadratiques, \emph{J. Reine Angew. Math.}, \textbf{133} (1908), 97--178.

\end{thebibliography}
\end{document}